 \theoremstyle{definition}
 \newtheorem{definition}{Definition}[section]
 \newtheorem{remark}[definition]{Remark}
 \newtheorem{theorem}[definition]{Theorem}
 \newtheorem{example}[definition]{Example}
 \newtheorem{corollary}[definition]{Corollary}
 \newtheorem{lemma}[definition]{Lemma}
 \title{\textbf{Strong solutions of semilinear SPDEs with unbounded diffusion}}
 \author{Florian Bechtold\\
 florian.bechtold@sorbonne-universite.fr}
 \date{\today}
 \address{LPSM, Sorbonne Université\\
 4 Place Jussieu\\
 75005 Paris\\
 France}
  \email{florian.bechtold@sorbonne-universite.fr}
\newcommand{\norm}[1]{\left\lVert#1\right\rVert}
\newcommand{\R}{\mathbb{R}}
\begin{document}

 \begin{abstract}
     We prove a modification to the classical maximal inequality for stochastic convolutions in 2-smooth Banach spaces using the factorization method. This permits to study semilinear stochastic partial differential equations with unbounded diffusion operators driven by cylindrical Brownian motion via the mild solution approach. In the case of finite dimensional driving noise, provided sufficient regularity on the coefficients, we establish existence and uniqueness of strong solutions. In the case of "critical unboundedness", we show how to use the stochastic compactness method to obtain a martingale solution.
 \end{abstract}
    \subjclass{60H15, 35R60}
  \keywords{Stochastic partial differential equations, strongly continuous semigroup, stochastic convolution, factorization method, stochastic compactness method}
   \maketitle
 \section{Introduction}
 We consider the problem
 \begin{align}
 \label{main_eqn}
    \begin{array}{lll}
    du_t&=(\Delta-1) u_t dt+(-\Delta+1)^{\delta_0}F(u_t)dt+(-\Delta+1)^{\delta_1/2}B(u_t)dW_t \\
    u(0)&=u_0 
     \end{array}
\end{align}
 for $t\in [0,T]$ where $\delta_0, \delta_1\in [0,1)$, $\Delta$ is the Laplacian with periodic boundary conditions on the $N$-dimensional torus $\mathbb{T}^N$and $W$ is cylindrical Brownian motion over a separable Hilbert space $U$.
 \newline
 \indent
 A canonical setting for the investigation of well-posedness to the above problem would be assuming $F:L^2(\mathbb{T}^N)\rightarrow L^2(\mathbb{T}^N)$ and $B:L^2(\mathbb{T}^N)\rightarrow L_2(U, L^2(\mathbb{T}^N))$ to be Lipschitz. Yet, due to the unbounded fractional Laplacian in the diffusion term, one loses the Hilbert-Schmidt property and is thus unable to define a stochastic integral with values in $L^2(\mathbb{T}^N)$, as required in the variational approach to stochastic partial differential equations \cite{krylov_stochastic_1981}, \cite{Liu2015}, \cite{GYONGY1998271}. 
 \newline
 \indent
 In keeping the above assumptions, an alternative approach consists in the mild formulation to the problem given by
 \[
 u_t=S_tu_0+\int_0^tS_{t-s}(-\Delta+1)^{\delta_0}F(u_s)ds+\int_0^tS_{t-s}(-\Delta+1)^{\delta_1/2}B(u_s)dW_s,
 \]
 where $S$ denotes the semigroup generated by $\Delta-1$. Exploiting this formulation in the particular case of a bounded diffusion, i.e. $\delta_1=0$, Hofmanová is able to apply a fixed point theorem in $L^2(\Omega\times[0,T], L^2(\mathbb{T}^N)))$ \cite{Hofmanova2013}. To this end, she crucially exploits the fact that the fractional power of a generator $A$ composed with its corresponding semigroup $(S_t)_{t\geq 0}$ yields a bounded operator, or more precisely
 \[
 \norm{(-A)^\delta S_t}\leq C_\delta t^{-\delta},
 \]
 provided the semigroup is analytic \cite{Pazy1983}. Applying this bound in combination with the triangle inequality on the drift term, she is able to close the argument in a classical way. In particular, this encompasses the use of the maximal inequality for stochastic convolutions, which in this context reads
 \[
 \mathbb{E}\left[\sup_{t\leq T} \norm{\int_0^tS_{t-s} B(u_s)dW_s}^2_{L^2(\mathbb{T}^N)}\right]\leq C \mathbb{E}\left[\int_0^T \norm{B(u_s)}^2_{L_2(U, L^2(\mathbb{T}^N))}ds\right].
 \]
 Notice that in the present context of an unbounded diffusion coefficient, i.e. $\delta_1\neq 0$, the above bound is not available since the diffusion term is no longer Hilbert-Schmidt, therefore preventing an immediate generalization of \cite{Hofmanova2013}.
 \newline\indent
 In this article, we show how to generalise the above maximal inequality in a weaker form to the present context of an unbounded diffusion coefficient, notably
 \begin{align*}
      &\mathbb{E}\left[ \sup_{t\leq T} \norm{\int_0^t (-\Delta+1)^{\delta_1/2}S_{t-s} B(u_s)dW_s}_{L^p(\mathbb{T}^N)}^p\right]\\
      &\leq CT^{p/2(1-\delta_1)}\mathbb{E}\left[\sup_{t\leq T} \norm{B(u_t)}^p_{\gamma(U, L^p(\mathbb{T}^N))}\right]
 \end{align*}
 for $p>2/(1-\delta_1)$. The reasoning we follow exploits a combination of the above trade-off between fractional powers of generators and their analytic semigroups as well as the factorization method \cite{factorization}, \cite{daprato_zabczyk}. In particular, the new maximal inequality derived permits to set up a fixed point argument in $L^p(\Omega, C([0,T], L^p(\mathbb{T}^N)))$ , which establishes existence and uniqueness of mild solutions to the equation in question.\newline\indent
 In the particular case of finite dimensional noise, i.e. $U$ being of finite dimension, we show that the corresponding sequence of Picard iterations is uniformly bounded in Sobolev spaces, provided sufficient regularity of $F$ and $B$. By using the Sobolev embedding theorem, this allows to identify the constructed mild solutions as strong ones. In order to use the embedding theorem, we have be able to pass to Sobolev spaces of high enough order. This necessitates to leave the Hilbert space framework and consider the more general theory of stochastic integration in $2$-smooth Banach spaces, which is why our maximal inequality is stated in this more general setting. For a concise introduction to this theory, we refer the reader to \cite{ondrejat} and \cite{Brzeniak1995}. Let us mention that in the case of finite dimensional noise an alternative approach using rough path theory to define and control the stochastic convolution and subsequently solve the associated semilinear problem was recently presented in \cite{gerasimovics2019nonautonomous} building upon earlier results in \cite{gubinelli2010}. Note that the constraint $\delta_0, \delta_1\in [0,1)$ ensures that the problem \eqref{main_eqn} is subcritical in the terminology of \cite{gerasimovics2019nonautonomous}. \newline\indent
 Finally, we discuss the limit case $\delta_1\nearrow 1$, to which the present maximal inequality can not be applied. We show that provided $B$ is sufficiently small, the corresponding sequence of strong solutions can be used to establish existence of a martingale solution of the corresponding limiting equation via the stochastic compactness method due to \cite{flandoli1995martingale}.

\section{Setting and main results}
Throughout this article we fix a stochastic basis $(\Omega, \mathcal{F}, (\mathcal{F}_t)_{t\geq0}, \mathbb{P})$ with the filtration satisfying the usual conditions. Let $U$ be a separable Hilbert space and $W$ a cylindrical Brownian motion on $U$. In case $U$ is of finite dimension, this means that for any orthonormal basis $(e_i)_{i\leq d}$ of $U$, $W$ admits the expansion
\[
W_t=\sum_{i=1}^d e_i \beta^i_t
\]
where $(\beta^i)_{i\leq d}$ are independent standard Brownian motions. Let $X$ be a 2-smooth Banach space and denote by $\gamma(U, X)$ the space of $\gamma$-radonifying operators from $U$ to $X$. Let $\mathbb{T}^N$ denote the $N$-dimensional torus and $L^p(\mathbb{T}^N)$, $W^{m,p}(\mathbb{T}^N)$, $H^{\alpha, p}(\mathbb{T^N})$ the associated Lebesgue, respectively Sobolev, respectively Bessel potential spaces, which we recall all fall into the class of 2-smooth Banach spaces  for $p\geq 2$ \cite{ondrejat}. We fix moreover a finite time horizon $T\in \R^+$.\newline
\indent
For convenience we introduce for $q\geq 2$ and $X$ a separable Banach space the space
\[
Z_{q,X}:=L^q(\Omega, C([0,T], X)),
\]
which endowed with its naturally inherited norm
\[
\norm{u}_{Z_{q,X}}^q:=\mathbb{E}\left[\sup_{t\leq T} \norm{u_t}_X^q\right]
\]
is itself a separable Banach space. Throughout the article $C$ shall denote an unessential constant that may change from one line to the next. Dependencies of the constant $C$ on parameters are indicated by corresponding subscripts.

\begin{theorem}[A maximal inequality]
 \label{max_ineq}
    Let $\delta\in [0,1)$ and $T>0$. Let $X$ be a 2-smooth Banach space, $A: D(A)\subset X\rightarrow X$ be generator of an analytic contraction semigroup of operators $(S_t)_{t\geq 0}$.  Let $W$ be a cylindrical Wiener process on a separable Hilbert space $U$. Suppose a measurable $B: X\rightarrow \gamma(U,X)$ satisfies for $q>\frac{2}{1-\delta}$
    \[
  \norm{B(u)}_{\gamma(U,X)}^q\leq C(1+\norm{u}_{X}^q),
    \]
then for every progressively measurable $u\in Z_{q, X}$   the process
    \[
    t\rightarrow\int_0^t (-A)^{\delta/2} S_{t-s}B(u_s)dW_s
    \]
    admits a $\mathbb{P}$-almost surely continuous modification
and there holds
    \[
    \mathbb{E}\left[\sup_{t\leq T}\norm{\int_0^t (-A)^{\delta/2} S_{t-s}B(u_s)dW_s}^q_X\right]\leq C T^{q/2(1-\delta)}\mathbb{E}\left[\sup_{t\leq T} \norm{B(u_t)}^p_{\gamma(U,X)}\right].
    \]
    \end{theorem}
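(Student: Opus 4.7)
The plan is to adapt the Da Prato--Zabczyk factorization method to the present setting of a fractional power $(-A)^{\delta/2}$ in front of the semigroup. Since $q>2/(1-\delta)$, one may fix once and for all an auxiliary exponent
\[
\alpha\in\left(\tfrac{1}{q},\tfrac{1-\delta}{2}\right),
\]
and both strict inequalities will be binding below: the upper one keeps the factorized stochastic integral square-integrable, while the lower one allows Hölder's inequality to close the deterministic convolution estimate.

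Using the classical identity $\int_s^t(t-\tau)^{\alpha-1}(\tau-s)^{-\alpha}d\tau=\pi/\sin(\pi\alpha)$, the commutativity of $(-A)^{\delta/2}$ with $S_r$, and a stochastic Fubini argument in the 2-smooth Banach space $X$, I would rewrite the convolution of interest as
\[
\int_0^t (-A)^{\delta/2}S_{t-s}B(u_s)dW_s=\frac{\sin(\pi\alpha)}{\pi}\int_0^t(t-\tau)^{\alpha-1}S_{t-\tau}Y_\tau\,d\tau,
\]
where
\[
Y_\tau:=\int_0^\tau(\tau-s)^{-\alpha}(-A)^{\delta/2}S_{\tau-s}B(u_s)dW_s.
\]
The bound $\alpha+\delta/2<1/2$ is precisely what makes $Y_\tau$ a well-defined BDG-admissible stochastic integral.

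For the two sides of the estimate, on the stochastic side I would combine the Burkholder--Davis--Gundy inequality in $X$, the ideal property $\norm{T\circ R}_{\gamma(U,X)}\leq\norm{T}_{X\to X}\norm{R}_{\gamma(U,X)}$, and the analytic-semigroup bound $\norm{(-A)^{\delta/2}S_r}_{X\to X}\leq Cr^{-\delta/2}$ to obtain
\[
\mathbb{E}\norm{Y_\tau}_X^q\leq C\,\tau^{q(1-2\alpha-\delta)/2}\,\mathbb{E}\sup_{s\leq T}\norm{B(u_s)}_{\gamma(U,X)}^q.
\]
On the deterministic side, denoting by $X_t$ the original convolution, the contractivity of $S_r$ together with Hölder's inequality with exponents $(q,q')$ applied to the factorized representation yields
\[
\sup_{t\leq T}\norm{X_t}_X^q\leq C\,T^{\alpha q-1}\int_0^T\norm{Y_\tau}_X^q\,d\tau,
\]
where the integrability $\int_0^t(t-\tau)^{(\alpha-1)q'}d\tau<\infty$ is exactly the reason for imposing $\alpha>1/q$.

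Combining the two bounds, the exponents add to $\alpha q-1+q(1-2\alpha-\delta)/2+1=q(1-\delta)/2$, which is precisely the power of $T$ claimed in the statement. Almost-sure continuity of the modification falls out of the factorized representation, since a kernel in $L^{q'}_{\mathrm{loc}}$ convolved with a path in $L^q(0,T;X)$ produces a continuous function of $t$. The main technical hurdle I anticipate is the rigorous justification of the stochastic Fubini step with the singular kernel $(\tau-s)^{-\alpha}$ in a 2-smooth Banach space, and the joint measurability and $L^2$-integrability of $(s,\tau)\mapsto(\tau-s)^{-\alpha}(-A)^{\delta/2}S_{\tau-s}B(u_s)$ that it requires; once this is in place the remainder is essentially a bookkeeping of exponents encoded in the single constraint $1/q<\alpha<(1-\delta)/2$ made available by the hypothesis $q>2/(1-\delta)$.
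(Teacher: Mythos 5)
Your proposal is correct and rests on the same factorization identity and the same two-step structure as the paper (an inner stochastic convolution controlled by the It\^o/BDG inequality in the $2$-smooth space, an outer deterministic convolution controlled by H\"older), but you distribute the singular factor differently: the paper keeps the inner integral $(Yu)_s=\int_0^s(s-\sigma)^{-\alpha}S_{s-\sigma}B(u_\sigma)dW_\sigma$ free of the fractional power, which only requires $\alpha<1/2$ there, and absorbs $(-A)^{\delta/2}$ into the outer operator $G_tf=\int_0^t(t-s)^{\alpha-1}(-A)^{\delta/2}S_{t-s}f(s)ds$, whose boundedness then forces $\alpha>1/q+\delta/2$; you instead place $(-A)^{\delta/2}$ inside the stochastic integral, which tightens the inner constraint to $\alpha<(1-\delta)/2$ but relaxes the outer one to $\alpha>1/q$, and lets you use plain contractivity of $S_{t-\tau}$ in the deterministic step. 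The two admissible windows for $\alpha$ are nonempty under exactly the same condition $q>2/(1-\delta)$ and both bookkeepings produce the power $T^{q(1-\delta)/2}$, so the results coincide; the only extra ingredient your variant uses is the commutation $S_{t-\tau}(-A)^{\delta/2}S_{\tau-s}=(-A)^{\delta/2}S_{t-s}$, which is standard for analytic semigroups, whereas the paper's splitting needs only the semigroup property. Your flagged technical hurdle (stochastic Fubini with the singular kernel) is handled in the paper by first checking, via the ideal property and the bound $\|(-A)^{\delta/2}S_r\|_{L(X)}\leq Cr^{-\delta/2}$, that the relevant iterated moment is finite for $u\in Z_{q,X}$ (Remark \ref{well_defined}); the identical verification closes your version. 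Likewise, your continuity claim is exactly the content of the second half of Lemma \ref{factor1} (continuity for continuous $f$ plus a density argument using the uniform bound), so nothing essential is missing.
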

    \begin{remark}[Comparison to classical maximal inequality]
   Recall that the classical maximal inequality for stochastic convolutions in 2-smooth Banach spaces reads for $q>0$
   \[
   \mathbb{E}\left[\sup_{t\leq T} \norm{\int_0^t S_{t-s} B(u_s)dW_s}^q_X\right]\leq C\mathbb{E}\left[\left(\int_0^T \norm{B(u_s)}_{\gamma(U,X)}^2ds\right)^{q/2}\right]
   \]
(see \cite{brzezniak1997}, \cite{maxinvanneerven}), which is considerably sharper than the above inequality for the case $\delta=0$. Yet, we remark that for the fixed point argument in $Z_{q, X}$ below, the coarser inequality of Theorem \ref{max_ineq} is sufficient. For sharper maximal inequalities in the infinite time horizon case, we refer to \cite{veraar_note_2011} and \cite{van_neerven_stochastic_2012}. We also refer to the recent survey on maximal inequalities for stochastic convolutions in $2$-smooth Banach spaces \cite{neerven2020maximal}.
    \end{remark}

    \begin{remark}[The stochastic integral is well defined]
    \label{well_defined}
      Note first that due to the assumption of $A$ being the generator of an analytic semigroup on $X$, we have
   \[
   ||(-A)^\delta S_t||_{L(X)}\leq C_\delta t^{-\delta}.
   \]
Moreover, by the the ideal property of $\gamma$-radonifying operators, one has by Itô's isomorphism in 2-smooth Banach spaces for any $t\in [0,T]$
   \begin{align*}
      &\mathbb{E}\left[\norm{\int_0^t (-A)^{\delta/2}S_{t-s} B(u_s)dW_s}_X^q\right]\\
      \leq C&\mathbb{E}\left[\left(\int_0^t \norm{(-A)^{\delta/2}S_{t-s} B(u_s)}_{\gamma(U,X)}^2ds\right)^{q/2}\right]\\
      \leq C&\mathbb{E}\left[\left(\int_0^t \norm{(-A)^{\delta/2}S_{t-s}}_{L(X)}^2 \norm{B(u_s)}_{\gamma(U,X)}^2ds\right)^{q/2}\right]\\
     \leq C &\mathbb{E}\left[\left(\int_0^t \frac{1}{(t-s)^{\delta}}\norm{B(u_s)}_{\gamma(U,X)}^2ds\right)^{q/2}\right]\\
      \leq C&\mathbb{E}\left[\sup_{t\leq T} \norm{B(u_t)}^q_{\gamma(U,X)}\right]\left(\int_0^t\frac{1}{s^\delta}ds\right)^{q/2}\\
      \leq C &t^{q/2(1-\delta)}\mathbb{E}\left[1+\sup_{t\leq T} \norm{u_t}^q_X\right]\\
      \leq C& T^{q/2(1-\delta)}(1+\norm{u}_{Z_{q,X}}^q)
   \end{align*}
hence, the stochastic integral in question is well defined provided $u\in Z_{q, X}$.
    \end{remark}
    
    \begin{corollary}[Distributional regularity for $\delta\geq 1$]
    Consider the case $X=L^p(\mathbb{T}^N)$ for $p\geq 2$ and $A=\Delta-1$. Note that due to 
    \[
\left( D((-\Delta+1)^{\alpha/2}), ||(-\Delta+1)^{\alpha/2} \cdot||_{L^p(\mathbb{T}^N)}\right)\simeq \left( H^{\alpha,p}(\mathbb{T}^N), ||\cdot||_{H^{\alpha,p}(\mathbb{T}^N)}\right)
\]
 one obtains for $\alpha>(\delta-1)^+$ and $q> 2/(1-(\delta-\alpha))$, provided all other conditions of Theorem \ref{max_ineq} are met,
\begin{align*}
    &\mathbb{E}\left[\sup_{t\leq T}\norm{\int_0^t (-\Delta+1)^{\delta/2}S_{t-s} B(u_s)dW_s}_{H^{-\alpha, p}(\mathbb{T}^N)}^q\right]\\
    \leq  C&\mathbb{E}\left[\sup_{t\leq T}\norm{\int_0^t (-\Delta+1)^{(\delta-\alpha)/2}S_{t-s} B(u_s)dW_s}_{L^{ p}(\mathbb{T}^N)}^q\right]\\
    \leq C&T^{q/2(1-(\delta-\alpha))}\mathbb{E}\left[\sup_{t\leq T} \norm{B(u_t)}^p_{\gamma(U, L^p(\mathbb{T}))}\right]
\end{align*}
where we exploited that for $\alpha\geq 0$, one can pass the bounded operator $(-\Delta+1)^{-\alpha/2}$ underneath the stochastic integral. The above permits thus to conclude that for $\delta\geq 1$ the convolution process
\[
t\rightarrow \int_0^t (-\Delta+1)^{\delta/2}S_{t-s} B(u_s)dW_s
\]
takes values in $H^{-\alpha, p}(\mathbb{T}^N)$ for all $\alpha>\delta-1$, $\mathbb{P}$-almost surely.
    \end{corollary}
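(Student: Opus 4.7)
The strategy is to convert the $H^{-\alpha,p}$-valued bound into an $L^p$-valued one and then apply Theorem \ref{max_ineq} with modified exponent $\delta-\alpha$. Concretely, I would first invoke the Bessel potential characterization
\[
\norm{\Phi}_{H^{-\alpha,p}(\mathbb{T}^N)} \simeq \norm{(-\Delta+1)^{-\alpha/2}\Phi}_{L^p(\mathbb{T}^N)},
\]
valid for $\alpha\geq 0$ and extended to negative-order spaces via the spectral calculus of the positive self-adjoint operator $-\Delta+1$. This identification reduces the quantity to control to the $L^p$-norm of the convolution with $(-\Delta+1)^{-\alpha/2}$ pulled out of the norm.

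Next, I would justify commuting the bounded linear operator $(-\Delta+1)^{-\alpha/2}$ with the stochastic integral. This is the only step that is not a one-line functional-calculus manipulation: I would verify the identity on simple integrands (where it is immediate from linearity) and then pass to the limit using It\^o's isomorphism in the 2-smooth Banach space $L^p(\mathbb{T}^N)$ together with the ideal property of $\gamma$-radonifying operators. The outcome is the merger
\[
(-\Delta+1)^{-\alpha/2}\int_0^t (-\Delta+1)^{\delta/2}S_{t-s}B(u_s)dW_s = \int_0^t (-\Delta+1)^{(\delta-\alpha)/2}S_{t-s}B(u_s)dW_s,
\]
which is the content of the first inequality in the displayed chain.

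With this reduction at hand, the hypothesis $\alpha>(\delta-1)^+$ forces $\delta-\alpha<1$, so Theorem \ref{max_ineq} applies with $\delta$ replaced by $\max(\delta-\alpha,0)$; the integrability requirement $q>2/(1-(\delta-\alpha))$ is exactly the condition that the theorem requires (if $\delta-\alpha\leq 0$ the operator is itself bounded and the classical maximal inequality suffices, giving the same $T$-power for free). This yields the second displayed inequality and hence the claimed bound. The final assertion is then immediate: for $\delta\geq 1$ and any $\alpha>\delta-1$ one selects $q$ large enough for both constraints to hold, obtains a finite bound on $\mathbb{E}\sup_{t\leq T}\|\cdot\|_{H^{-\alpha,p}}^q$, and concludes that the almost surely continuous modification produced by Theorem \ref{max_ineq} lives in $H^{-\alpha,p}(\mathbb{T}^N)$. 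The main (and only mildly technical) point I would expand upon in the write-up is the commutation of the bounded fractional power with the stochastic integral; everything else is a direct consequence of Theorem \ref{max_ineq} and the Bessel potential identification.
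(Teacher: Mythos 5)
Your proposal is correct and follows essentially the same route as the paper: identify the $H^{-\alpha,p}$-norm with the $L^p$-norm via the functional calculus of $-\Delta+1$, commute the bounded operator $(-\Delta+1)^{-\alpha/2}$ with the stochastic integral, and apply Theorem \ref{max_ineq} with $\delta$ replaced by $\delta-\alpha$ (which lies below $1$ precisely because $\alpha>(\delta-1)^+$). Your additional elaboration of the commutation step via simple integrands and the Itô isomorphism, and the remark on the trivial case $\delta-\alpha\leq 0$, are consistent with what the paper leaves implicit.
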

We now turn to the main theorem to be demonstrated in the following section.
\begin{theorem}[Mild solutions in $Z_{q, W^{m, p}(\mathbb{T}^N)}$]
\label{main_spde_result}
Let $B_1, \ldots, B_d\in C^{m}(\mathbb{T}^N\times \R)$ and $F\in C^m(\R)$ have bounded derivatives up to order $m$ satisfying
\[
\sum_{i=1}^d |B_i(x, \xi)|^2\leq C (1+|\xi|^2).
\]
Then for $\delta_0, \delta_1\in [0, 1)$, $q> 2/(1-\delta_1)$, $p\geq 2$ and $u_0\in L^q(\Omega, W^{m,p}(\mathbb{T}^N))\cap L^{mq}(\Omega, W^{1, mp}(\mathbb{T}^N))$ such that $u_0$ is $\mathcal{F}_0$-measurable, the equation
\begin{equation}
    \begin{split}
         du_t&=(\Delta -1)u_tdt+(-\Delta+1)^{\delta_0}F(u_t) dt+(-\Delta+1)^{\delta_1/2}\sum_{k=1}^dB_i(u_t)d\beta^i_t \\
    u(0)&=u_0
    \label{main_eqn1}
    \end{split}
\end{equation}
admits a unique mild solution  $u\in Z_{q, W^{m,p}(\mathbb{T}^N)}\cap Z_{mq, W^{1, mp}(\mathbb{T}^N)}$ satisfying
\begin{align*}
    &||u||_{Z_{q, W^{m,p}(\mathbb{T}^N)}}^q+||u||_{Z_{mq, W^{1, mp}(\mathbb{T}^N)}}^{mq}\\
    \leq C&\left(1+\mathbb{E}||u_0||_{W^{m,p}(\mathbb{T}^N)}^q+\mathbb{E}||u_0||_{W^{1,mp}(\mathbb{T}^N)}^{mq} \right).
\end{align*}
\end{theorem}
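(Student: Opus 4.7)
The strategy is to first construct a unique mild solution by a Banach fixed point argument in the base space $Z_{q, L^p(\mathbb{T}^N)}$, and then to upgrade its regularity by analysing the Picard iterates in the coupled scale $Z_{q, W^{m,p}(\mathbb{T}^N)} \cap Z_{mq, W^{1, mp}(\mathbb{T}^N)}$.

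For the base step I would introduce the mild formulation operator
\[
\Phi(u)_t := S_t u_0 + \int_0^t S_{t-s}(-\Delta+1)^{\delta_0} F(u_s)\,ds + \sum_{i=1}^d \int_0^t S_{t-s}(-\Delta+1)^{\delta_1/2} B_i(u_s)\,d\beta^i_s
\]
on $Z_{q, L^p(\mathbb{T}^N)}$. The drift is controlled by $\|(-\Delta+1)^{\delta_0} S_{t-s}\|_{L(L^p)} \leq C(t-s)^{-\delta_0}$, Minkowski and the Lipschitz property of $F$, producing a factor $T^{q(1-\delta_0)}$ which is finite since $\delta_0 < 1$. The stochastic term is handled by Theorem \ref{max_ineq} applied with $X = L^p(\mathbb{T}^N)$, $A = \Delta - 1$ and $\delta = \delta_1$, yielding a factor $T^{q/2(1-\delta_1)}\,\mathbb{E}\sup_t \|B(u_t)\|_{\gamma(\mathbb{R}^d, L^p)}^q$; in the finite-dimensional setting the $\gamma$-norm is pointwise equivalent to $\|(\sum_i |B_i(\cdot, u)|^2)^{1/2}\|_{L^p}$, which the growth hypothesis controls linearly in $\|u\|_{L^p}$. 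Global Lipschitz continuity of $F$ and the $B_i$, combined with an equivalent time-weighted norm on $Z_{q, L^p}$ (or short-time contraction patched by concatenation), then produces a unique fixed point $u \in Z_{q, L^p(\mathbb{T}^N)}$.

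For the regularity bootstrap I would consider the Picard iterates $u^0_t := S_t u_0$ and $u^{n+1} := \Phi(u^n)$. Since $\partial^\alpha$ commutes with $S_t$ and with any fractional power of $-\Delta+1$ on the torus, and since $W^{k,p}(\mathbb{T}^N)$ is 2-smooth for $p \geq 2$, Theorem \ref{max_ineq} applies both in $X = W^{m,p}$ with exponent $q$ and in $X = W^{1, mp}$ with exponent $mq$ (both exceeding $2/(1-\delta_1)$). Writing $a_n := \|u^n\|_{Z_{q, W^{m,p}}}^q$ and $b_n := \|u^n\|_{Z_{mq, W^{1, mp}}}^{mq}$, I would derive a coupled inequality of the form
\[
a_{n+1} + b_{n+1} \leq C\bigl(1 + \mathbb{E}\|u_0\|_{W^{m,p}}^q + \mathbb{E}\|u_0\|_{W^{1,mp}}^{mq}\bigr) + C\,T^{\kappa}(a_n + b_n)
\]
for some $\kappa > 0$. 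The key algebraic input is Faà di Bruno: for $|\alpha| \leq m$, $\partial^\alpha B_i(\cdot, u)$ expands as a finite sum of terms $(\partial_x^{\beta_0}\partial_\xi^k B_i)(\cdot, u)\prod_{j=1}^k \partial^{\beta_j} u$ with $|\beta_0| + \sum_j |\beta_j| = |\alpha|$, and analogously for $F(u)$; boundedness of all derivatives of $F$ and the $B_i$ reduces matters to estimating such products in $L^p$. The worst case $|\alpha|=m$ with all $|\beta_j|=1$ produces the factor $(\nabla u)^m$, controlled in $L^p$ by $\|\nabla u\|_{L^{mp}}^m$, which explains the appearance of $Z_{mq, W^{1, mp}}$. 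Intermediate products are handled by Hölder's inequality combined with Gagliardo--Nirenberg interpolation between $W^{1, mp}$ and $W^{m, p}$, while the $W^{1, mp}$ estimate closes on its own via the chain rule $\nabla B_i(\cdot, u) = \partial_x B_i + \partial_\xi B_i \cdot \nabla u$, linearly controlled in $\|u\|_{W^{1, mp}}$.

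The main obstacle is this simultaneous closure of the $W^{m, p}$ and $W^{1, mp}$ estimates: the top-order Faà di Bruno term genuinely forces the joint space structure, and it is precisely what dictates both the integer exponent $mq$ (so that Theorem \ref{max_ineq} applies with the correct integrability in the auxiliary space) and the integrability of the initial datum on both components. Once the coupled inequality is set up correctly, uniform boundedness of $(a_n + b_n)$ is obtained on a small interval by absorbing $CT^\kappa < 1$ and is extended to $[0,T]$ by concatenation; by uniqueness of the fixed point in $Z_{q, L^p}$ together with lower semicontinuity of the Sobolev norms, the uniform bound transfers to $u$, yielding the claimed regularity and estimate.
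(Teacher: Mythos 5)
Your proposal follows essentially the same route as the paper: a Banach fixed point in $Z_{q, L^p(\mathbb{T}^N)}$ built on Theorem \ref{max_ineq} and the analytic-semigroup bound for the drift, followed by uniform bounds on the Picard iterates in the coupled scale $Z_{q, W^{m,p}(\mathbb{T}^N)}\cap Z_{mq, W^{1,mp}(\mathbb{T}^N)}$ — the paper delegates your Faà di Bruno computation to Hofmanová's Nemytskii-operator estimates, which yield exactly the growth condition $\|B(z)\|_{\gamma(U,W^{m,p})}\le C(1+\|z\|^m_{W^{1,mp}}+\|z\|_{W^{m,p}})$ you identify — and finally a transfer of the bound to the limit by weak-* compactness and lower semicontinuity. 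The only step the paper makes explicit that you gloss over is the identification of the stochastic integral across the different Banach spaces (Lemma \ref{Banach_space_consistency}), needed to conclude that $u$ is a mild solution, and not merely an element, of the smaller space.
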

\begin{remark}[Strong solutions]
Note that by the Sobolev embedding theorem, the above Theorem \ref{main_spde_result} implies that the constructed mild solutions lie in $Z_{q, C^{m-1, \lambda}(\mathbb{T}^N)}$ for $\lambda\in (0,1-N/p)$. In particular, for $m\geq 3$, this permits to identify the mild solution as a strong one. 
\end{remark}
 \begin{remark}[Other types of unboundedness]
    The statement of Theorem \ref{main_spde_result} can be extended to equations of the form
    \[
    du_t=(\Delta-1) u_tdt+\mbox{div}(F(u_t))dt+(-\Delta+1)^{\delta_1/2}\sum_{i=1}^dB_i(u_t)d\beta^i_t
    \]
    for $F\in C^m(\R, \R^N)$ by interpreting them as
    \begin{equation*}
        \begin{split}
             du_t&=(\Delta-1) u_tdt+(-\Delta+1)^{1/2}\left( (-\Delta+1)^{-1/2}\mbox{div}(F(u_t))\right)dt\\
             &+(-\Delta+1)^{\delta_1/2}\sum_{i=1}^dB_i(u_t)d\beta^i_t
        \end{split}
    \end{equation*}
    and verifying that the associated
 Nemytskii operator
    \begin{align*}
        f: X&\rightarrow \gamma(U,X)\\
        u&\rightarrow (-\Delta+1)^{-1/2} \mbox{div}(F(u))
    \end{align*}
    satisfies the necessary conditions stated in Lemmas  \ref{Nemytskii}, \ref{Nemytskii_sobolev} and \ref{Nemytskii_sobolev_higher_order}. Heuristically, this is possible as the inverse root of the shifted Laplacian compensates for the unboundedness of the divergence.    For the precise statement refer to Theorem 2.1 in \cite{Hofmanova2013}.
    \end{remark}

\section{Proof of the main results}

\subsection{Proof of Theorem \ref{max_ineq}}

For the proof of the maximal inequality, we use the factorization method due to \cite{factorization}. Exploiting the identity
    \[
    \int_\sigma^t (t-s)^{\alpha-1}(s-\sigma)^{-\alpha}ds=\frac{\pi}{\sin \pi \alpha}
    \]
    which holds for any $\sigma\leq t$ in $[0,T]$ and $\alpha \in (0,1)$, we have due to Fubini's theorem (which we may apply due to Remark \ref{well_defined})
    \begin{align*}
        &\int_0^t (-A)^{\delta/2} S_{t-s} B(u_s)dW_s\\
        =&\frac{\sin \pi \alpha}{\pi}\int_0^t (t-s)^{\alpha-1}(-A)^{\delta/2} S_{t-s} \underbrace{\left(\int_0^s (s-\sigma)^{-\alpha} S_{s-\sigma} B(u_\sigma)dW_\sigma \right)}_{=:(Yu)_s}ds.
    \end{align*}
    We proceed with the proof in two steps, made up of the following two Lemmata.
    
    \begin{lemma}
    \label{factor1}
    Let $\alpha\in (0,1)$, $\delta\in [0,1)$ and $p>1$ such that
    \[
    \lambda:=\frac{p}{p-1}(1+\delta/2-\alpha)<1.
    \]
    Then the family of operators $(G_t)_t$ defined via
    \begin{align*}
        \begin{array}{rl}
        G_t: L^p([0,T], X)&\rightarrow X\\
        f &\rightarrow \int_0^t (t-s)^{\alpha-1}(-A)^{\delta/2} S_{t-s} f(s)ds
        \end{array}
    \end{align*}
    is uniformly continuous in the sense that
    \[
    \sup_{t\leq T}||G_t f||_X\leq C \left(\frac{1}{1-\lambda}\right)^{\frac{p-1}{p}}T^{\frac{p-1}{p}(1-\lambda)}||f||_{L^p([0,T], X)}.
    \]
    Moreover, for every fixed $f\in L^p([0,T], X)$ the mapping $t\rightarrow G_tf$ is continuous as a mapping from $[0,T]$ to $X$. 
    \end{lemma}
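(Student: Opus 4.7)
The plan is to deduce the norm bound from the analyticity estimate combined with Hölder's inequality, and then to upgrade to continuity in $t$ through a density argument using the norm bound as the crucial uniform control.

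For the first part, I would invoke the fact, already recorded in Remark \ref{well_defined}, that $\|(-A)^{\delta/2} S_t\|_{L(X)} \leq C t^{-\delta/2}$, which gives
\begin{align*}
\|G_t f\|_X \leq C \int_0^t (t-s)^{\alpha-1-\delta/2} \|f(s)\|_X \, ds.
\end{align*}
Applying Hölder's inequality with conjugate exponents $p/(p-1)$ and $p$ produces
\begin{align*}
\|G_t f\|_X \leq C \left( \int_0^t (t-s)^{(\alpha-1-\delta/2) p/(p-1)} \, ds \right)^{(p-1)/p} \|f\|_{L^p([0,T], X)}.
\end{align*}
A direct computation identifies the exponent as $-\lambda$, so the hypothesis $\lambda < 1$ makes the singular integral equal to $t^{1-\lambda}/(1-\lambda)$, and taking $t \leq T$ gives the claimed bound.

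For continuity, I would first treat $f \in C([0,T], X)$, which is dense in $L^p([0,T], X)$. For such $f$ and $0 \leq t_0 < t \leq T$, split
\begin{align*}
G_t f - G_{t_0} f &= \int_{t_0}^t (t-s)^{\alpha-1} (-A)^{\delta/2} S_{t-s} f(s) \, ds \\
&\quad + \int_0^{t_0} \bigl[(t-s)^{\alpha-1} (-A)^{\delta/2} S_{t-s} - (t_0-s)^{\alpha-1} (-A)^{\delta/2} S_{t_0-s}\bigr] f(s) \, ds.
\end{align*}
The first integral is controlled, via the preceding Hölder computation now restricted to $[t_0, t]$, by a constant times $(t-t_0)^{(1-\lambda)(p-1)/p} \|f\|_\infty$, which vanishes as $t \to t_0$. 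For the second, strong continuity of $(S_t)_{t\geq 0}$ makes the integrand converge pointwise to zero for $s < t_0$, and the analyticity estimate furnishes an integrable majorant independent of $t$ in a small neighbourhood of $t_0$; dominated convergence then concludes. Finally, given arbitrary $f \in L^p([0,T], X)$, approximating by $f_n \in C([0,T], X)$ and invoking the linear estimate of the first step yields $\sup_{t\leq T}\|G_t f_n - G_t f\|_X \to 0$, so $t \mapsto G_t f$ is a uniform limit of continuous functions and therefore continuous.

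The principal technical nuisance I anticipate is producing the integrable majorant in the dominated convergence step, since both the weight $(t-s)^{\alpha-1}$ and the semigroup factor become singular as $s \uparrow t_0$; restricting $t$ to a small interval around $t_0$ and controlling the sum $(t-s)^{\alpha-1-\delta/2} + (t_0-s)^{\alpha-1-\delta/2}$ in $L^{p/(p-1)}([0,t_0])$ via the condition $\lambda < 1$ handles this. Once past this point, the density extension is routine.
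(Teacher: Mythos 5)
Your argument is correct. The first part (the uniform bound) is exactly the paper's: the analyticity estimate $\|(-A)^{\delta/2}S_{t-s}\|_{L(X)}\leq C(t-s)^{-\delta/2}$, the triangle inequality for Bochner integrals, and H\"older with exponents $p/(p-1)$ and $p$, with $\lambda<1$ guaranteeing integrability of $(t-s)^{-\lambda}$. The density reduction for continuity is also the same. Where you genuinely diverge is in proving continuity for $f\in C([0,T],X)$: the paper substitutes $u=t-s$ so that the increment falls on $f$, writing $G_tf-G_sf$ as $\int_0^s u^{\alpha-1}(-A)^{\delta/2}S_u\bigl(f(t-u)-f(s-u)\bigr)du$ plus a tail term, and then only needs uniform continuity of $f$ together with another H\"older estimate; you instead keep the increment on the kernel and invoke dominated convergence, which additionally requires that $t\mapsto(-A)^{\delta/2}S_t$ be strongly (in fact operator-norm) continuous on $(0,\infty)$. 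That extra ingredient is standard for analytic semigroups (e.g.\ via $\|(-A)^{1+\delta/2}S_r\|\leq Cr^{-1-\delta/2}$ and integrating), and your majorant $(t_0-s)^{\alpha-1-\delta/2}$ is integrable precisely because $\lambda<1$ forces $\alpha>\delta/2$, so the step closes. The paper's change of variables is marginally more economical in that it uses nothing beyond what is already on the table; your version is slightly more robust in that it does not need to reshuffle the integration variable and would survive kernels for which that trick is unavailable. Do note that you only treat $t\downarrow t_0$; the case $t\uparrow t_0$ is symmetric and should at least be mentioned.
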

    \begin{proof}
    Note again that one has
    \[
    ||(-A)^{\delta/2} S_{t-s}||_{L(X)}\leq C (t-s)^{-\delta/2}
    \]
    and therefore by the triangle inequality for Bochner integrals and Hölder's inequality
    \begin{align*}
        ||G_tf||_X&\leq C \int_0^t  (t-s)^{\alpha-1-\delta/2} ||f(s)||_Xds\\
        &\leq C\left(\int_0^t (t-s) ^{-\lambda}ds\right)^\frac{p-1}{p} ||f||_{L^p([0,T], X)}\\
        &\leq C \left(\frac{1}{1-\lambda}\right)^{\frac{p-1}{p}} T^{(-\lambda+1)\frac{p-1}{p}}||f||_{L^p([0,T], X)}.
    \end{align*}
    Towards continuity, suppose  $f\in C([0,T], X)$, then 
    \begin{align*}
    \norm{G_tf-G_sf}_X
    &\leq \norm{\int_0^su^{\alpha-1}(-A)^{\delta/2}S_u\left(f(t-u)-f(s-u)\right)du}_X\\
    &+\norm{\int_s^tu^{1-\alpha}(-A)^{\delta/2}S_uf(t-u)du}_X\\
    &\leq C \left(\int_0^s u^{-\lambda}du\right)^{\frac{p-1}{p}}\left(\int_0^s\norm{f(t-u)-f(s-u)}_X^pdu\right)^{1/p}\\
    &+C \left(\int_s^tu^{-\lambda}du\right)^{\frac{p-1}{p}} \norm{f}_{L^p([0,T], X)}. 
\end{align*}
Due to the assumed continuity of $f$, the first expression in the above estimate vanishes as $t$ goes to $s$, which together with the continuity of the Lebesgue integral in the second expression yields continuity of $t\rightarrow G_tf$ for $f\in C([0,T], X)$. Together with the already established uniform bound on the operator family, this permits to employ a density argument, establishing continuity for $f\in L^p([0,T], X)$.
    \end{proof}
    
    \begin{lemma}
    \label{factor2}
    Suppose that $\alpha\in (0,1/2)$.
    Then the mapping
  \begin{align*}
  \begin{array}{rl}
     Y: L^p(\Omega, C([0,T], X))&\rightarrow L^p(\Omega, L^p([0,T], X))\\
      u&\rightarrow Yu
      \end{array}
  \end{align*}
defined via
 \[
 (Yu)_s:=\int_0^s (s-\sigma)^{-\alpha} S_{s-\sigma} B(u_\sigma)dW_\sigma
 \]
  satisfies
  \[
   ||Yu||_{L^p(\Omega, L^p([0,T], X))}^p\leq C_{\alpha, p} T^{p/2(1-2\alpha)+1}\mathbb{E}\left[\sup_{s\leq T} ||B(u_s)||^p_{\gamma(U,X)}\right]
  \]
  where
  \[
  C_{\alpha, p}=C \left(\frac{1}{1-2\alpha}\right)^{p/2}\frac{1}{p/2(1-2\alpha)+1}.
  \]
    \end{lemma}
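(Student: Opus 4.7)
The plan is to unfold $\|Yu\|^p_{L^p(\Omega,L^p([0,T],X))}$ by Fubini as $\int_0^T \mathbb{E}[\|(Yu)_s\|_X^p]\,ds$, then estimate the integrand pointwise in $s$ and integrate. The key inputs are Itô's isomorphism in 2-smooth Banach spaces, the contraction property $\|S_t\|_{L(X)}\leq 1$ combined with the ideal property of $\gamma$-radonifying operators, and the fact that $2\alpha<1$ makes the relevant time integral converge.

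More concretely, I would first apply Itô's isomorphism to get
\[
\mathbb{E}[\|(Yu)_s\|_X^p]\leq C\,\mathbb{E}\left[\left(\int_0^s (s-\sigma)^{-2\alpha}\|S_{s-\sigma}B(u_\sigma)\|_{\gamma(U,X)}^2 d\sigma\right)^{p/2}\right],
\]
and then, using the ideal property together with $\|S_{s-\sigma}\|_{L(X)}\leq 1$, replace $\|S_{s-\sigma}B(u_\sigma)\|_{\gamma(U,X)}$ by $\|B(u_\sigma)\|_{\gamma(U,X)}$. Pulling the supremum over $\sigma\leq s\leq T$ outside of the deterministic time integral yields the bound
\[
\mathbb{E}[\|(Yu)_s\|_X^p]\leq C\,\mathbb{E}\left[\sup_{\sigma\leq T}\|B(u_\sigma)\|_{\gamma(U,X)}^p\right]\left(\int_0^s (s-\sigma)^{-2\alpha}d\sigma\right)^{p/2}.
\]
Since $\alpha\in(0,1/2)$, the remaining integral evaluates to $s^{1-2\alpha}/(1-2\alpha)$, giving a factor $(1-2\alpha)^{-p/2}s^{p(1-2\alpha)/2}$.

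Finally, I would integrate over $s\in[0,T]$, which produces the factor $T^{p(1-2\alpha)/2+1}/(p(1-2\alpha)/2+1)$, matching exactly the claimed constant $C_{\alpha,p}$ and the exponent $p/2(1-2\alpha)+1$ on $T$.

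There is no real obstacle here: the only subtle point is to ensure that applying Itô's isomorphism is legitimate, which follows from $B(u_\sigma)$ being $\gamma$-radonifying together with the growth assumption on $B$ already used in Remark \ref{well_defined}, and to observe that the assumption $\alpha<1/2$ is precisely what is needed for $\int_0^s(s-\sigma)^{-2\alpha}d\sigma$ to be finite. All other steps are direct applications of Hölder-type manipulations and the semigroup contraction, already invoked in the analogous estimates of Remark \ref{well_defined}.
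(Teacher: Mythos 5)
Your proposal is correct and follows essentially the same route as the paper: Fubini to reduce to a pointwise-in-$s$ estimate, Itô's isomorphism in the 2-smooth Banach space, the ideal property together with the contraction bound on the semigroup, and then the explicit evaluation of $\int_0^s(s-\sigma)^{-2\alpha}d\sigma$ followed by integration in $s$, which reproduces the stated constant $C_{\alpha,p}$ exactly. The only difference is cosmetic: you make explicit the ideal-property step that the paper absorbs silently into its chain of inequalities.
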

    \begin{proof}
 One has
 \begin{align*}
     &||Yu||^p_{L^p(\Omega, L^p([0,T], X))}\\
     &=\mathbb{E}\left[\int_0^T \norm{\int_0^s (s-\sigma)^{-\alpha} S_{s-\sigma} B(u_\sigma)dW_\sigma}_X^pds\right]\\
     &=\int_0^T \mathbb{E}\left[\norm{\int_0^s (s-\sigma)^{-\alpha} S_{s-\sigma} B(u_\sigma)dW_\sigma}_X^p\right]ds\\
     &\leq C\int_0^T \mathbb{E}\left[\left(\int_0^s (s-\sigma)^{-2\alpha} || B(u_\sigma)||_{\gamma(U,X)}^2d\sigma\right)^{p/2}\right]ds\\
     &\leq C \mathbb{E}\left[\sup_{\sigma\leq T} ||B(u_\sigma)||_{\gamma(U,X)}^p\right]\int_0^T( \int_0^s(s-\sigma)^{-2\alpha}  d\sigma  )^{p/2}ds\\
     &\leq C_{\alpha, p} T^{p/2(1-2\alpha)+1} \mathbb{E}\left[\sup_{\sigma\leq T} ||B(u_\sigma)||_{\gamma(U,X)}^p\right]
 \end{align*}
where we crucially exploited Itô's isomorphism for $X$ a 2-smooth Banach space, i.e. 
\[
\mathbb{E}\left[||\int_0^t \phi_sdW_s||^p_X\right]\leq C\mathbb{E}\left[\left(\int_0^t ||\phi_s||_{\gamma(U,X)}^2ds\right)^{p/2}\right],
\]
the case $p=2$ corresponding to the Itô-isometry in the Hilbert space setting. This inequality is also what one obtains from the classical maximal inequality for stochastic convolutions in \cite{maxinvanneerven} by considering the trivial semigroup $S_t=\mbox{Id}$.
    \end{proof}

    \begin{remark}
   We wish to next combine the two previous Lemmata \ref{factor1} and \ref{factor2}. Note that to this end, we need to demand for $\delta\in [0,1)$ that
   \[
    \lambda:=\frac{p}{p-1}(1+\delta/2-\alpha)<1
   \]
   and $\alpha\in (0,1/2)$. Remark that the least restrictive condition on $\lambda$ is obtained by choosing $\alpha$ as large as possible, i.e. setting for some $\epsilon>0$ small $\alpha:=1/2-\epsilon/2$, we obtain 
   \[
   p>\frac{2}{1-\epsilon-\delta}
   \]
   which, since $\epsilon>0$ can be chosen arbitrarily small, resorts to demanding
   \[
    p>\frac{2}{1-\delta}
   \]
   as required in the statement of Theorem \ref{max_ineq}.
    \end{remark}
    
    Hence, assuming $p>2/(1-\delta)$, we may put together the two previous Lemmata \ref{factor1} and \ref{factor2} and we obtain
    \begin{align*}
        &\mathbb{E}\left[\sup_{t\leq T} ||\int_0^t (-A)^{\delta/2} S_{t-s} B(u_s)dW_s||^p_X\right]\\
        &\leq C T^{(-\lambda+1) (p-1)}\mathbb{E}\left[||Y||_{L_p([0,T], X)}^p\right]\\
        &=CT^{p-1-p(1+\delta/2-\alpha)}||Y||_{L^p(\Omega, L^p([0,T], X)}^p\\
        &\leq C_{\alpha, p, \delta} T^{p/2(1-\delta)}\mathbb{E}\left[\sup_{t\leq T} ||B(u_t)||^p_{\gamma(U,X)}\right]
    \end{align*}
    where
    \[
    C_{\alpha, p, \delta}=C\left(\frac{1}{1-2\alpha}\right)^{p/2}\frac{1}{p/2(1-2\alpha)+1}\left(\frac{1}{1-\lambda}\right)^{\frac{p-1}{p}}
    \]
    completing the proof to Theorem \ref{max_ineq}.
        \begin{remark}
    We stress the fact that this proof using the factorization method can not be used to recover the sharper classical maximal inequality in the case of a bounded diffusion term, i.e. $\delta=0$. Indeed, the factorization method was originally used in \cite{factorization} to prove existence of a continuous version of the stochastic convolution (among other regularity results) and not to prove a maximal inequality. As will be seen in the proof of Theorem \ref{main_spde_result} though, this coarser (yet more general) inequality is sufficient to implement a fixed point argument nonetheless.
    \end{remark}

\subsection{Proof of Theorem \ref{main_spde_result}}
Having established the maximal inequality of Theorem \ref{max_ineq}, we can now generalize the strategy employed by Hofmanová in \cite{Hofmanova2013} in order to prove Theorem \ref{main_spde_result}. This strategy consists in showing that first there exists a unique mild solution to \eqref{main_eqn} in $Z_{q, L^p(\mathbb{T}^N)}$ via a Banach fixed point argument. Considering the corresponding sequence of Picard iterations $(u^n)_n\subset Z_{q, L^p(\mathbb{T}^N)}$, one is able to show thanks to the maximal inequality of Theorem \ref{max_ineq} the sequence $(u^n)_n$ is also uniformly bounded in $Z_{q, W^{m,p}(\mathbb{T}^N)}$ (provided the initial condition lies in this space), allowing to conclude that the corresponding limit (in the topology of $Z_{q, L^p(\mathbb{T}^N)}$), actually already lies in $Z_{q, W^{m,p}(\mathbb{T}^N)}$. By the Sobolev embedding theorem, this permits to conclude that the unique mild solution is differentiable in space and hence a strong solution to \eqref{main_eqn}.\newline\indent
For the sake of readability, we choose to split up the steps mentioned above into two parts: In a first part (Lemmas denoted as abstract statements), we consider the generic setting in which $F$ and $B$ are seen as operators with suitable properties. In a second part, we recall for the convenience of the reader results of Hofmanová in \cite{Hofmanova2013} to justify why the given functions $F$ and $(B_i)_{i\leq d}$ give rise to associated Nemytskii operators with such properties (Lemmas denoted Nemytskii operator type results).

\subsubsection*{Mild solutions in $Z_{q, L^p(\mathbb{T}^N)}$}
\begin{lemma}[Abstract statement]
    For $\delta_0, \delta_1\in [0,1)$, let $q>\frac{2}{1-\delta_1}$. Let $X$ be a 2-smooth Banach space, let $W$ be a cylindrical Brownian motion on a separable Hilbert space $U$. Suppose that
  $B: X\rightarrow  \gamma(U,X)$
   and  $F:X\rightarrow X$ are Lipschitz continuous. Let $A:D(A)\subset X\rightarrow X$ be generator of an analytic contraction semigroup $(S_t)_{t\leq T}\subset L(X)$. Then for any $T<\infty$ and any $\mathcal{F}_0$-measurable $u_0\in L^q(\Omega, X)$ , the stochastic partial differential equation
   \begin{equation}
       \begin{split}
           du_t&=A u_tdt+(-A)^{\delta_0}F(u)dt+(-A)^{\delta_1/2}B(u_t)dW_t\\
        u(0)&=u_0
        \label{abstract_eqn}
       \end{split}
   \end{equation}
    admits a unique mild solution, meaning there exists a unique progressively measurable process $u\in Z_{q, X}$ such that
    \[
    u_t=S_tu_0+\int_0^t (-A)^{\delta_0} S_{t-s} F(u_s)ds+\int_0^t (-A)^{\delta_1/2} S_{t-s} B(u_s)dW_s.
    \]
Moreover $u$ satisfies
\[
||u||_{Z_{q, X}}^q\leq C(1+\mathbb{E}||u_0||^q_X).
\]
\label{abstract_statement}
    \end{lemma}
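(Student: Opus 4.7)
The plan is to construct the unique mild solution by a Banach fixed point argument applied to the Picard map
\[
\Psi(u)_t := S_tu_0+\int_0^t (-A)^{\delta_0}S_{t-s}F(u_s)\,ds+\int_0^t (-A)^{\delta_1/2}S_{t-s}B(u_s)\,dW_s
\]
acting on the space $Z_{q,X}$ of progressively measurable, $X$-valued continuous processes. First I would verify that $\Psi$ sends $Z_{q,X}$ into itself: $t\mapsto S_tu_0$ is continuous with $\mathbb{E}\sup_t\|S_tu_0\|_X^q\leq \mathbb{E}\|u_0\|_X^q$ by contractivity; the drift term is continuous by strong continuity of $S$ and dominated convergence, while the bound $\|(-A)^{\delta_0}S_{t-s}\|_{L(X)}\leq C(t-s)^{-\delta_0}$ combined with the triangle inequality and the linear growth of $F$ (from Lipschitz continuity) yields
\[
\sup_{t\leq T}\Big\|\int_0^t (-A)^{\delta_0}S_{t-s}F(u_s)\,ds\Big\|_X\leq \frac{C\,T^{1-\delta_0}}{1-\delta_0}\,\big(1+\sup_{s\leq T}\|u_s\|_X\big);
\]
the stochastic convolution is continuous with the required $q$-moment control by applying Theorem \ref{max_ineq} directly (its hypothesis $q>2/(1-\delta_1)$ is exactly the one assumed here).

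The contraction step rests on rerunning the same two estimates for the difference $\Psi(u)-\Psi(v)$. The Lipschitz continuity of $F$ gives, for any $T_0\leq T$,
\[
\mathbb{E}\sup_{t\leq T_0}\Big\|\!\int_0^t\!(-A)^{\delta_0}S_{t-s}(F(u_s)-F(v_s))\,ds\Big\|_X^q\leq C\,L^q\,T_0^{q(1-\delta_0)}\,\|u-v\|_{Z_{q,X}}^q,
\]
while Theorem \ref{max_ineq}, applied to the process $\Phi_s:=B(u_s)-B(v_s)$ (which is linearly bounded in $\|u\|_X+\|v\|_X$ by the Lipschitz assumption, so all its integrability requirements follow from $u,v\in Z_{q,X}$), yields
\[
\mathbb{E}\sup_{t\leq T_0}\Big\|\!\int_0^t\!(-A)^{\delta_1/2}S_{t-s}\Phi_s\,dW_s\Big\|_X^q\leq C\,L^q\,T_0^{q(1-\delta_1)/2}\,\|u-v\|_{Z_{q,X}}^q.
\]
Summing these gives a Lipschitz constant $C L^q(T_0^{q(1-\delta_0)}+T_0^{q(1-\delta_1)/2})$ for $\Psi$ on $Z_{q,X}([0,T_0])$, which can be made strictly less than $1$ by choosing $T_0$ sufficiently small in terms of $L,q,\delta_0,\delta_1$. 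Banach's fixed point theorem then produces a unique mild solution on $[0,T_0]$.

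Crucially, the chosen $T_0$ does not depend on the initial datum $u_0$, so I would globalize by iteration: given the solution up to time $kT_0$, its endpoint $u_{kT_0}$ is $\mathcal{F}_{kT_0}$-measurable and in $L^q(\Omega,X)$, so the same argument applied on $[kT_0,(k+1)T_0]$ extends the solution; after $\lceil T/T_0\rceil$ patches we cover $[0,T]$, and subinterval uniqueness glues to global uniqueness. Finally, the a priori bound is obtained by applying the very same three estimates to $\Psi(u)=u$ itself: on each patch $[kT_0,(k+1)T_0]$ one gets $\|u\|_{Z_{q,X}}^q\leq C(1+\mathbb{E}\|u_{kT_0}\|_X^q)$, and iterating over the finitely many patches yields $\|u\|_{Z_{q,X}}^q\leq C_T(1+\mathbb{E}\|u_0\|_X^q)$.

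I do not expect a serious obstacle beyond careful bookkeeping, as Theorem \ref{max_ineq} absorbs all the difficulty arising from the unbounded diffusion; the mild point to be checked is that the maximal inequality is time-translation invariant (the constant depends only on the interval length, not its location), which is immediate from the factorization-based proof, and that the Lipschitz constants of $F$ and $B$ are time-independent, so the same $T_0$ works uniformly across all patches.
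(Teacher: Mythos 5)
Your proposal is correct and follows essentially the same route as the paper: a Banach fixed point argument for the Picard map on $Z_{q,X}$ over a small time horizon, with the drift handled via $\|(-A)^{\delta_0}S_{t-s}\|_{L(X)}\leq C(t-s)^{-\delta_0}$, the stochastic convolution via Theorem \ref{max_ineq}, and globalization by restarting on successive subintervals. The only cosmetic difference is in the final a priori bound, which you obtain by absorbing $\|u\|_{Z_{q,X}}^q$ from the fixed-point identity while the paper invokes weak-$*$ lower semicontinuity along the Picard iterates; both are valid.
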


    \begin{proof}
    One uses the classical Banach fixed point theorem, i.e. a contraction argument in $Z_{q,X}$ for a sufficiently small time horizon. Consider the operator $\mathcal{K}:\  Z_{q, X} \rightarrow Z_{q, X}$, defined by
    \begin{align*}
       (\mathcal{K}u)_t:= S_{t}u_0+\int_0^{t}(-A)^{\delta_0} S_{t-s} F(u_s)ds+ \int_0^{t} (-A)^{\delta_1/2} S_{t-s} B(u_s)dW_s.
    \end{align*}
    Notice that the assumed Lipschitz continuity of $B$ and $F$ imply growth conditions of the form
    \[
    \norm{B(u)}_{\gamma(U, X)}^q\leq C(1+\norm{u}_{X}^q)
    \]
    and
    \[
    \norm{F(u)}_X^q\leq C(1+\norm{u}_X^q).
    \]

    Concerning the term associated with the drift, one has therefore
    \begin{align*}
        \mathbb{E}\left[\sup_{t\leq T} ||\int_0^t (-A)^{\delta_0} S_{t-s}F(u_s)ds||^q\right]&\leq C \mathbb{E}\left[\left(\int_0^t \frac{1}{(t-s)^{\delta_0}}||F(u_s)||_Xds\right)^q\right]\\
        &\leq C T^{q(1-\delta_0)} \mathbb{E}\left[\sup_{t\leq T} ||F(u_t)||_X^q\right]\\
        &\leq C T^{q(1-\delta_0)} (1+||u||_{Z_{q, X}}^q)
    \end{align*}
    as well as 
    \begin{align*}
        &\mathbb{E}\left[\sup_{t\leq T} ||\int_0^t (-A)^{\delta_0} S_{t-s}(F(u_s)-F(v_s))ds||^q_X\right]\\
        \leq &C  T^{q(1-\delta_0)}\mathbb{E}\left[\sup_{t\leq T} ||F(u_t)-F(v_t)||_X^q\right]\\
        \leq &CT^{q(1-\delta_0)} ||u-v||_{Z_{q, X}}^q.
    \end{align*}
    \noindent
    Concerning the term associated with the diffusion, note that because of the maximal inequality of Theorem \ref{max_ineq} one has
    \begin{align*}
      &\mathbb{E}\left[\sup_{t\leq T} ||\int_0^t (-A)^{\delta/2}S_{t-s} B(u_s)dW_s||_X^q\right]\\
       &\leq CT^{q/2(1-\delta)}\mathbb{E}\left[\sup_{t\leq T} \norm{B(u_t)}_X^q\right]\\
       &\leq CT^{q/2(1-\delta)}(1+||u||_{Z_{q,X}}).
    \end{align*}
    as well as
     \begin{align*}
        &\mathbb{E}\left[\sup_{t\leq T} ||\int_0^t (-\Delta)^{\delta/2}S_{t-s} ( B(u_s)-B(v_s))dW_s||_X^q\right]\\
        \leq& C  T^{q/2(1-\delta)} \mathbb{E}\left[\sup_{t\leq t} ||B(u_t)-B(v_t)||_{\gamma(U, X)}^q\right]\\
        \leq& C T^{q/2(1-\delta)}\mathbb{E}\left[\sup_{t\leq T} ||u_t-v_t||_X^q\right]\\
        =&C T^{q/2(1-\delta)} ||u-v||^q_{Z_{q, X}}
    \end{align*}
   due to the assumed Lipschitz continuity of $B$. Overall, one concludes that 
    \begin{align}
       || \mathcal{K}(u)||_{Z_{q, X}}^q\leq  C\left(||u_0||_{Z_q}^q+(T^{q(1-\delta_0)}+T^{q/2(1-\delta_1)})(1+||u||_{Z_{q, X}}^q)\right)
       \label{uniform_picard}
    \end{align}
   meaning that $\mathcal{K}$ maps $Z_{q, X}$ into itself. Moreover, there holds
    \begin{align*}
        ||\mathcal{K}(u)-\mathcal{K}(v)||_{Z_{q, X}}^q
        &\leq C\left(T^{q(1-\delta_0)}+T^{q/2(1-\delta_1)}\right)||u-v||_{Z_{q, X}}^q.
    \end{align*}
    By choosing $T$ sufficiently small, $\mathcal{K}$ is a contraction on $Z_{q, X}$ and hence admits a unique fixed point that by definition coincides with a mild solution.\newline\indent
 Continuity in time is a consequence of the existence of a continuous modification of the stochastic convolution in Theorem \ref{max_ineq} as well as the continuity of the Bochner integral and of the semigroup. This permits to compute the unique solution to the equation in question on $[T, 2T]$ with initial condition $u_T$ calculated previously, etc., thus recovering existence and uniqueness of mild solutions on arbitrary
 finite time horizons.\newline\indent
 Finally, the bound on solutions is derived from weak-*-lower semicontinuity of the norm $||\cdot||_{Z_{q, X}}$, the strong convergence of Picard iterations and the above estimate \eqref{uniform_picard}.
 \end{proof}

\begin{lemma}(A Nemytskii operator type result for $L^p(\mathbb{T}^N)$)
\label{Nemytskii}
Let $U$ be a $d$-dimensional Hilbert space with orthonormal basis $(e_i)_{i\leq d}$. Let $B_1, \ldots, B_d\in C^1(\mathbb{T}^N\times \R; \R)$ have bounded derivative and satisfy the growth condition
\[
\sum_{i=1}^d|B_i(x, \xi)|^2\leq C(1+|\xi|^2).
\]
Then the associated Nemytskii operator
\begin{align*}
    B: L^p(\mathbb{T}^N)&\rightarrow \gamma(U, L^p(\mathbb{T}^N))\\
    z&\rightarrow \left(u\rightarrow \sum_{i=1}^dB_i(\cdot, z(\cdot))\langle u, e_i\rangle\right)
\end{align*}
is well defined and Lipschitz continuous.
 Suppose $F\in C^1(\R)$ is of bounded derivative. Then the associated Nemytskii operator
\begin{align*}
    F: L^p(\mathbb{T}^N) &\rightarrow L^p(\mathbb{T}^N)\\
    u&\rightarrow F(u)
\end{align*}
is well defined and Lipschitz continuous.
\end{lemma}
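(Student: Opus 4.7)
My plan is to treat the two Nemytskii operators separately. The second assertion, concerning $F$, is standard: the assumption that $F$ has bounded derivative yields the pointwise estimates $|F(\xi)| \leq |F(0)| + \|F'\|_\infty |\xi|$ and $|F(\xi_1) - F(\xi_2)| \leq \|F'\|_\infty |\xi_1 - \xi_2|$, and integrating the $p$-th powers over $\mathbb{T}^N$ directly yields both well-definedness as a map into $L^p(\mathbb{T}^N)$ and Lipschitz continuity with constant $\|F'\|_\infty$.

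The substance of the lemma lies in the first claim, and the idea is to reduce the statement about the $\gamma$-norm to pointwise estimates on the $B_i$. Since $U$ is finite-dimensional with basis $(e_i)_{i \leq d}$, every bounded linear operator $T: U \to L^p(\mathbb{T}^N)$ is automatically $\gamma$-radonifying, and the $\gamma$-norm admits an explicit representation via Gaussian randomization. Taking independent standard Gaussian variables $(g_i)_{i \leq d}$, applying Kahane's inequality (equivalence of $L^r$-moments of Gaussian series in a Banach space), Fubini's theorem, and the elementary fact that $\sum_i g_i T(e_i)(x)$ is at each point $x$ a centred Gaussian of variance $\sum_i |T(e_i)(x)|^2$, one obtains the well-known square-function equivalence
\[
\|T\|_{\gamma(U, L^p(\mathbb{T}^N))} \simeq \left\|\left(\sum_{i=1}^d |T(e_i)|^2\right)^{1/2}\right\|_{L^p(\mathbb{T}^N)}.
\]
Specialising to $T = B(z)$, where $B(z)(e_i) = B_i(\cdot, z(\cdot))$, the growth hypothesis on the $B_i$ immediately gives $\|B(z)\|_{\gamma(U, L^p)}^p \leq C(1 + \|z\|_{L^p}^p)$, while the boundedness of $\partial_\xi B_i$ provides the pointwise Lipschitz bound $\sum_i |B_i(x, z_1(x)) - B_i(x, z_2(x))|^2 \leq C |z_1(x) - z_2(x)|^2$, from which Lipschitz continuity of $B$ with respect to the $L^p$-norm follows by integration.

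The only subtle point is the square-function identification of the $\gamma$-norm in $L^p$: for $p \neq 2$ this is not a Hilbert--Schmidt identity but a genuine consequence of Kahane's inequality together with Fubini. It is, however, a standard result in the theory of $\gamma$-radonifying operators with values in Banach lattices, and is implicit in the references \cite{ondrejat} and \cite{Brzeniak1995} already cited. Granted this identification, both assertions of the lemma reduce to the elementary transcription of pointwise bounds on $F$ and the $B_i$ into integrated $L^p$-norm estimates, and no further difficulty arises.
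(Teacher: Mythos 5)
Your argument is correct and is essentially the argument the paper delegates to Proposition 4.1 of \cite{Hofmanova2013}: the paper's own ``proof'' is just that citation, and the cited proof likewise rests on the square-function characterization of the $\gamma(U,L^p)$-norm for finite-dimensional $U$ (via Gaussian randomization and Kahane's inequality) followed by the pointwise growth and mean-value estimates on the $B_i$ and $F$. You have correctly isolated the one non-elementary ingredient and filled in the routine reductions, so there is nothing to add.
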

\begin{proof}
See Proposition 4.1 in \cite{Hofmanova2013}.
\end{proof}

\begin{corollary}
Suppose the conditions of Theorem \ref{main_spde_result} are satisfied. Then there exists a unique mild solution $u\in Z_{q, L^p(\mathbb{T}^N)}$ to the problem \eqref{main_eqn1}.
\end{corollary}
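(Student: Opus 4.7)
The plan is to obtain the corollary as a direct application of the abstract Lemma \ref{abstract_statement} with the Banach space $X = L^p(\mathbb{T}^N)$ and the generator $A = \Delta - 1$, once we have verified that all the hypotheses of that lemma are met under the assumptions of Theorem \ref{main_spde_result}.

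First I would record the functional-analytic setup. For $p \geq 2$, $L^p(\mathbb{T}^N)$ is a 2-smooth Banach space, and the shifted Laplacian $\Delta - 1$ with periodic boundary conditions generates an analytic contraction semigroup $(S_t)_{t \geq 0}$ on $L^p(\mathbb{T}^N)$; this is classical and is the same semigroup that already appears throughout the introduction and in the statement of the corollary on distributional regularity. The noise $W = \sum_{i=1}^d e_i \beta^i$ is a cylindrical Brownian motion on the finite-dimensional Hilbert space $U = \spn(e_1,\ldots,e_d)$, as required.

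Next I would translate the coefficients in \eqref{main_eqn1} into the abstract form of \eqref{abstract_eqn}. The drift nonlinearity $F$ is read as the Nemytskii operator $u \mapsto F(u)$ from $L^p(\mathbb{T}^N)$ to itself, and the diffusion nonlinearity is read as the Nemytskii operator $u \mapsto \big(v \mapsto \sum_{i=1}^d B_i(\cdot, u(\cdot))\langle v, e_i\rangle\big)$ from $L^p(\mathbb{T}^N)$ to $\gamma(U, L^p(\mathbb{T}^N))$. By the hypothesis of Theorem \ref{main_spde_result}, the functions $B_1,\ldots,B_d \in C^m(\mathbb{T}^N \times \R)$ and $F \in C^m(\R)$ have bounded derivatives up to order $m \geq 1$, and the $B_i$'s satisfy the growth bound $\sum_{i=1}^d |B_i(x,\xi)|^2 \leq C(1+|\xi|^2)$. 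These are exactly the hypotheses of Lemma \ref{Nemytskii}, which therefore yields the Lipschitz continuity of both Nemytskii operators.

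With Lipschitz continuity of $B$ and $F$ in hand, together with $\delta_0, \delta_1 \in [0,1)$, $q > 2/(1-\delta_1)$, and the $\mathcal{F}_0$-measurability of the initial datum $u_0 \in L^q(\Omega, W^{m,p}(\mathbb{T}^N)) \hookrightarrow L^q(\Omega, L^p(\mathbb{T}^N))$, every single hypothesis of Lemma \ref{abstract_statement} is satisfied. Applying that lemma yields a unique progressively measurable mild solution $u \in Z_{q, L^p(\mathbb{T}^N)}$ to \eqref{main_eqn1}, together with the bound $\|u\|_{Z_{q, L^p(\mathbb{T}^N)}}^q \leq C(1+\mathbb{E}\|u_0\|_{L^p(\mathbb{T}^N)}^q)$. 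There is really no obstacle here: the corollary is a bookkeeping step whose only role is to instantiate the abstract result in the concrete $L^p(\mathbb{T}^N)$-setting. The genuinely substantial work, namely the uniform bound on the Picard iterates in the Sobolev norm and the subsequent identification of the mild solution as a strong one, is carried out in the lemmas that follow.
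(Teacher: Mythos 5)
Your proposal is correct and follows exactly the route the paper intends: the corollary is stated without proof precisely because it is the immediate combination of Lemma \ref{Nemytskii} (which supplies the Lipschitz continuity of the Nemytskii operators under the $C^m$, $m\geq 1$, hypotheses of Theorem \ref{main_spde_result}) with the abstract fixed-point result of Lemma \ref{abstract_statement} applied to $X=L^p(\mathbb{T}^N)$ and $A=\Delta-1$. Nothing is missing.
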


\subsubsection*{Mild solutions in $Z_{q, W^{1, p}(\mathbb{T}^N)}$}

\begin{lemma}[Abstract statement]
\label{abstract_statement_refined}
  Suppose all conditions of Lemma \ref{abstract_statement} are met. Let $\tilde{X}$ be another 2-smooth Banach space, continuously embedded into $X$, the embedding operator being the identity and suppose $u_0\in L^q(\Omega, \tilde{X})$. Suppose that $B$ seen as an operator $B:\tilde{X}\rightarrow \gamma(U, \tilde{X})$ is well defined and satisfies the growth condition 
  \[
  ||B(u)||_{\gamma(U, \tilde{X})}^q\leq C(1+||u||_{\tilde{X}}^q).
  \]
  Suppose that $F$ seen as an operator $F:\tilde{X}\rightarrow \tilde{X}$ is well defined satisfying the growth condition
  \[
  ||F(u)||_{\tilde{X}}^q\leq C(1+||u||^q_{\tilde{X}}). 
  \]
  Suppose moreover that $A|_{\tilde{X}}$ generates a strongly continuous contraction semigroup $(\tilde{S}_t)_t\subset L(\tilde{X})$ such that $S_t|_{\tilde{X}}=\tilde{S}_t$. 
  Then the unique mild solution $u\in Z_{q, X}$ to \eqref{abstract_eqn} is also the unique mild solution $u\in Z_{q,\tilde{X}}$ to \eqref{abstract_eqn} satisfying
  \[
  ||u||^q_{Z_{q, \tilde{X}}}\leq C(1+\mathbb{E}||u_0||_{\tilde{X}}^q).
  \]
  \end{lemma}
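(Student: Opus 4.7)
The plan is to mimic the Banach fixed-point construction of Lemma \ref{abstract_statement}, but to extract from it only a \emph{uniform a priori bound} on the Picard iterates in the stronger space $Z_{q,\tilde{X}}$, since without Lipschitz continuity of $B$ and $F$ on $\tilde{X}$ no contraction argument is available there. This bound is then transferred to the already-known $Z_{q,X}$-limit by a lower-semicontinuity argument.

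Concretely, I first verify that the two ingredients used in the proof of Lemma \ref{abstract_statement}, namely the deterministic convolution estimate for the drift and the maximal inequality of Theorem \ref{max_ineq} for the stochastic convolution, both apply verbatim with $X$ replaced by $\tilde{X}$. This uses the 2-smoothness of $\tilde{X}$, the postulated growth conditions on $B$ and $F$ in $\tilde{X}$, and the fact that $\tilde{S}_t=S_t|_{\tilde{X}}$ inherits the analytic contraction property from $S_t$, so that $\norm{(-A)^{\delta/2}\tilde{S}_t}_{L(\tilde{X})}\leq C\,t^{-\delta/2}$ holds. Combining them yields the $\tilde{X}$-analogue of \eqref{uniform_picard},
\[
\norm{\mathcal{K}(v)}_{Z_{q,\tilde{X}}}^q \leq C\Big(\mathbb{E}\norm{u_0}_{\tilde{X}}^q + \eta(T)\big(1+\norm{v}_{Z_{q,\tilde{X}}}^q\big)\Big), \quad \eta(T):=T^{q(1-\delta_0)}+T^{q/2(1-\delta_1)},
\]
for any $v\in Z_{q,\tilde{X}}$. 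Defining the Picard sequence $u^0_t:=S_tu_0$ and $u^{n+1}:=\mathcal{K}(u^n)$, an easy induction shows $u^n\in Z_{q,\tilde{X}}$; on a short enough interval $[0,T_0]$ with $C\eta(T_0)\leq 1/2$, iterating the above estimate produces the uniform bound $\sup_n\norm{u^n}_{Z_{q,\tilde{X}}}^q\leq K\big(1+\mathbb{E}\norm{u_0}_{\tilde{X}}^q\big)$.

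To transfer this bound to the limit, I would use that Lemma \ref{abstract_statement} guarantees $u^n\to u$ strongly in $Z_{q,X}$, so that along a subsequence $u^{n_k}\to u$ holds $\mathbb{P}$-almost surely in $C([0,T_0],X)$. For each such $\omega$, set $M_k(\omega):=\sup_{t\leq T_0}\norm{u^{n_k}_t(\omega)}_{\tilde{X}}$, which is a.s. finite. For every $t$ the sequence $(u^{n_k}_t(\omega))_k$ is bounded in $\tilde{X}$ by $M_k(\omega)$, so by reflexivity of $\tilde{X}$ (which holds for the Sobolev spaces in the intended application) one may extract a weakly convergent further subsequence; its weak limit necessarily coincides with the strong $X$-limit $u_t(\omega)$. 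Weak lower semicontinuity of $\norm{\cdot}_{\tilde{X}}$ yields pointwise $\norm{u_t(\omega)}_{\tilde{X}}\leq\liminf_k M_k(\omega)$, and taking the supremum over $t$ followed by Fatou's lemma in $\omega$ delivers
\[
\norm{u}_{Z_{q,\tilde{X}}}^q \leq \liminf_k \norm{u^{n_k}}_{Z_{q,\tilde{X}}}^q \leq K\big(1+\mathbb{E}\norm{u_0}_{\tilde{X}}^q\big).
\]
Extension to arbitrary $T$ proceeds by patching on intervals of length $T_0$, the just-obtained estimate providing control on the initial data at each step, while uniqueness in $Z_{q,\tilde{X}}$ is immediate from the embedding $Z_{q,\tilde{X}}\hookrightarrow Z_{q,X}$ combined with the uniqueness statement of Lemma \ref{abstract_statement}.

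The main obstacle is precisely the transfer step: absent any Lipschitz or contraction structure on $\tilde{X}$, one cannot close a fixed-point argument there and must instead propagate the $Z_{q,\tilde{X}}$-bounds through a limit procedure. Reflexivity of $\tilde{X}$ together with the weak-lower-semicontinuity Fatou argument is what makes this go through, and explains why the statement is naturally formulated as an \emph{a priori} refinement of Lemma \ref{abstract_statement} rather than as an independent fixed-point result.
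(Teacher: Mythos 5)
Your overall strategy is the same as the paper's: run the Picard iteration, use the growth conditions together with Theorem \ref{max_ineq} to get a uniform bound in $Z_{q,\tilde{X}}$ on a short time interval, and then transfer that bound to the known $Z_{q,X}$-limit by a weak-compactness/lower-semicontinuity argument, with uniqueness inherited from the embedding. Your implementation of the transfer step differs slightly in flavour: the paper applies Alaoglu's theorem directly in $Z_{q,\tilde{X}}$ to extract a weak-* convergent subsequence whose limit is identified with $u$ by uniqueness of limits in $Z_{q,X}$, whereas you argue pointwise in $(\omega,t)$ via reflexivity of $\tilde{X}$ and then integrate with Fatou. Both routes are workable (and reflexivity of $\tilde{X}$ is automatic here, since $2$-smooth spaces are uniformly smooth, hence reflexive, so you need not restrict to the Sobolev-space application).

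There is, however, one genuine omission. Having shown that $u$ \emph{lies} in $Z_{q,\tilde{X}}$ with the stated bound, you still have to show that $u$ is a \emph{mild solution in $\tilde{X}$}, i.e.\ that the identity
\[
u_t=\tilde{S}_tu_0+\int_0^t (-A)^{\delta_0}\tilde{S}_{t-s} F(u_s)ds+\Bigl(\tilde{X}\textstyle\int_0^t\Bigr) (-A)^{\delta_1/2} \tilde{S}_{t-s}B(u_s)dW_s
\]
holds with the stochastic integral constructed as a limit in $L^2(\Omega;\tilde{X})$. The stochastic integral is defined relative to the norm of the ambient space, so the $X$-valued integral appearing in the mild formulation you already possess is a priori a different object from the $\tilde{X}$-valued one; the paper closes this gap with Lemma \ref{Banach_space_consistency}, which shows the two integrals coincide when $\tilde{X}\hookrightarrow X$ with identity embedding. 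Relatedly, you do not address why $t\mapsto u_t$ is continuous with values in $\tilde{X}$ (rather than merely bounded), which in the paper again comes from the continuous modification of the stochastic convolution in $\tilde{X}$ provided by Theorem \ref{max_ineq}. Neither point is difficult, but both are needed to conclude that $u$ is the unique mild solution in $Z_{q,\tilde{X}}$ rather than an element of $Z_{q,\tilde{X}}$ solving the equation only in $X$.
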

  \begin{proof}
   Since the conditions of the previous theorem are met, there exists a unique $u\in Z_{q, X}$ which is the strong limit of Picard iterations, given via the recursive formula $u^{0}=u_0$ and
   \[
   u^{n}_t=\underbrace{S_tu_0+\int_0^t (-A)^{\delta_0}S_{t-s}F(u_s^{n-1})ds+\int_0^t (-A)^{-\delta_1/2} S_{t-s} B(u^{n-1}_s)dW_s}_{=:\mathcal{K}(u^{n-1})}
   \]
   for $n\geq 1$. Note that due to the maximal inequality of Theorem \ref{max_ineq} for the stochastic integral and the triangle inequality for the Bochner integral, one obtains an estimate similar to \eqref{uniform_picard} but in the space $Z_{q, \tilde{X}}$ namely
   \begin{align*}
     ||\mathcal{K}(u^{n})||_{Z_{q,\tilde{X}}}^q
     \leq \left(||u_0||_{Z_{q, \tilde{X}}}^q+(T^{q(1-\delta_0)}+ T^{q/2(1-\delta_1)})(1+ ||u^{n-1}_t||^q_{Z_{q\tilde{X}}})\right)
   \end{align*}
We conclude recursively that 
\begin{align*}
    ||u^{n}||_{Z_X}^p&\leq C(1+ ||u_0||_{Z_{q, \tilde{X}}}^q)\sum_{k=0}^{n-1} (CT^{q/2(1-\delta_1)})^k+(CT^{q(1-\delta_0)})^k.
\end{align*}
For $T$ sufficiently small the above geometric series converges and one obtains a uniform bound on the sequence of Picard iterations $(u^n)_n$ in the the space $Z_{q, \tilde{X}}$. By Alaoglu's theorem, one can extract a weak-*-convergent subsequence with limit $v\in Z_{\tilde{X}}$. Since $Z_{\tilde{X}}\hookrightarrow Z_X$, one also has
\[
u^n\overset{*}{\rightharpoonup} v
\]
in $Z_X$ and by uniqueness of limits, $v=u$, meaning the limit of Picard iterations already lies in $Z_{\tilde{X}}$. By Lemma \ref{Banach_space_consistency}, one can identify the stochastic integral in $X$ with the stochastic integral in $\tilde{X}$, meaning in the notation introduced in Lemma \ref{Banach_space_consistency} below
\begin{align*}
    u_t&=S_tu_0+\int_0^t (-A)^{\delta_0}S_{t-s} F(u_s)ds+\left(X\int_0^t\right) (-A)^{-\delta_1/2} S_{t-s}B(u_s)dW_s\\
    &=\tilde{S}_tu_0+\int_0^t \tilde{S}_{t-s} (-A)^{\delta_0}F(u_s)ds+\left(\tilde{X}\int_0^t\right) (-A)^{-\delta_1/2} \tilde{S}_{t-s}B(u_s)dW_s
\end{align*}
which is the definition of $u$ being a mild solution in $Z_{q, \tilde{X}}$. Uniqueness follows from the embedding $Z_{\tilde{X}}\hookrightarrow Z_X$. Continuity follows from the continuous version of the modified stochastic convolution thanks to Theorem \ref{max_ineq}. The estimate of the solutions follows from the weak-*-lower semicontinuity of the norm and the uniform bound on Picard iterations established. 
  \end{proof}

 \begin{lemma}(A Nemytskii operator type result for $W^{1,p}(\mathbb{T}^N)$)
\label{Nemytskii_sobolev}
Let $U$ be a $d$-dimensional Hilbert space with orthonormal basis $(e_i)_{i\leq d}$. Let $B_1, \ldots, B_d\in C^1(\mathbb{T}^N\times \R)$ be differentiable and of bounded derivative. 
Then the associated Nemytskii operator
\begin{align*}
  B: W^{1,p}(\mathbb{T}^N)&\rightarrow \gamma(U, W^{1,p}(\mathbb{T}^N))\\
    z&\rightarrow \left(u\rightarrow \sum_{i=1}^dB_i(\cdot, z(\cdot))\langle u, e_i\rangle\right)
\end{align*}
is well defined and satisfies the linear growth condition
\[
||B(z)||_{\gamma(U, W^{1,p}(\mathbb{T}^N))}\leq C(1+||z||_{W^{1,p}(\mathbb{T}^N)})
\]
for all $p\geq 1$. Suppose $F\in C^1(\R)$ has bounded derivative, then the operator
\begin{align*}
    W^{1, p}(\mathbb{T}^N)&\rightarrow W^{1, p}(\mathbb{T}^N)\\
    u &\rightarrow F(u)
\end{align*}
is well defined and satisfies 
\[
||F(u)||_{W^{1, p}(\mathbb{T}^N)}\leq C(1+||u||_{W^{1, p}(\mathbb{T}^N)})
\]
\end{lemma}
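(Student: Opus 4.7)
The plan is to deduce both claims from the standard Sobolev chain rule for compositions of $W^{1,p}$-functions with $C^1$ maps of bounded derivative. Since $U$ is $d$-dimensional with orthonormal basis $(e_i)_{i\leq d}$, $\gamma(U, W^{1,p}(\mathbb{T}^N))$ coincides with $L(U, W^{1,p}(\mathbb{T}^N))$ and its radonifying norm is equivalent, with constants depending only on $d$, to $\bigl(\sum_{i=1}^d \|T e_i\|_{W^{1,p}(\mathbb{T}^N)}^2\bigr)^{1/2}$. Hence the bound on $B$ reduces to a $W^{1,p}$-estimate on each $B_i(\cdot, z(\cdot))$ in terms of $\|z\|_{W^{1,p}(\mathbb{T}^N)}$, while the statement for $F$ is the usual Nemytskii result in $W^{1,p}$.

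For $F$, the pointwise bound $|F(u(x))| \leq |F(0)| + \|F'\|_\infty |u(x)|$ gives the $L^p$-part immediately. The gradient follows from the Sobolev chain rule: since $F \in C^1(\mathbb{R})$ with bounded derivative, $F(u) \in W^{1,p}$ and $\nabla F(u) = F'(u)\,\nabla u$ almost everywhere, yielding $|\nabla F(u)| \leq \|F'\|_\infty |\nabla u|$ pointwise. For $B$, the analogous chain rule is applied to the composition $x \mapsto B_i(x, z(x))$; observing that $B_i \in C^1(\mathbb{T}^N \times \mathbb{R})$ with bounded derivative implies that both $\partial_x B_i$ and $\partial_\xi B_i$ are bounded on $\mathbb{T}^N \times \mathbb{R}$ and that $|B_i(x,\xi)| \leq C(1+|\xi|)$, one obtains distributionally
\[
\nabla\bigl[B_i(\cdot, z(\cdot))\bigr] = (\partial_x B_i)(\cdot, z(\cdot)) + (\partial_\xi B_i)(\cdot, z(\cdot))\,\nabla z.
\]
The first summand is uniformly bounded and the second is dominated in $L^p$ by $\|\partial_\xi B_i\|_\infty \|\nabla z\|_{L^p}$, which together with the $L^p$-bound on $B_i(\cdot, z(\cdot))$ gives the linear growth of $\|B_i(\cdot, z(\cdot))\|_{W^{1,p}(\mathbb{T}^N)}$; summing in $i$ via the $\gamma$-norm equivalence concludes.

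The main obstacle I anticipate is the rigorous justification of the chain-rule identity above in the distributional sense when the composing $C^1$ map has only bounded (not compactly supported) derivative. This is handled by mollifying $z$ into a smooth approximation $z_n \to z$ in $W^{1,p}$ and passing to the limit in the classical chain rule for $B_i(\cdot, z_n(\cdot))$, using the uniform Lipschitz behavior of $\xi \mapsto B_i(x, \xi)$ (uniform in $x$ by boundedness of $\partial_\xi B_i$) to guarantee strong $L^p$-convergence of the compositions and of the right-hand side of the chain-rule identity, and similarly for the $F$-case.
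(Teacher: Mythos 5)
Your argument is correct, and it is worth noting that the paper does not actually prove this lemma: it delegates entirely to Proposition 4.2 of \cite{Hofmanova2013} together with the appendix Lemma \ref{Nemytskii_simple}. What you have written is in substance the content of that citation — the Sobolev chain rule for compositions with $C^1$ maps of bounded derivative, justified by mollification — so your proposal supplies a self-contained proof where the paper offers only a reference. Two points in your reduction deserve to be made explicit. First, the equivalence of the $\gamma$-radonifying norm with $\bigl(\sum_{i=1}^d\|Te_i\|_{W^{1,p}(\mathbb{T}^N)}^2\bigr)^{1/2}$ does hold with constants depending only on $d$: one inequality is the triangle inequality for the Gaussian sum, and the reverse follows from $\|Te_j\|_X=\bigl\|\mathbb{E}\bigl[\gamma_j\sum_i\gamma_i Te_i\bigr]\bigr\|_X\le\|T\|_{\gamma(U,X)}$; this step is also precisely where finite-dimensionality of $U$ is used irreplaceably, consistent with the paper's remark following Lemma \ref{Nemytskii_sobolev_higher_order}. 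Second, your zeroth-order bounds $|B_i(x,\xi)|\le C(1+|\xi|)$ and the absorption of $|F(0)|$ into the additive constant rely on compactness of $\mathbb{T}^N$ (to bound $\sup_x|B_i(x,0)|$ and to make constants $L^p$-integrable); this is harmless here but should be stated, since on an unbounded domain the claim would fail as written. With these remarks, your mollification argument for the distributional identity $\nabla[B_i(\cdot,z)]=(\partial_xB_i)(\cdot,z)+(\partial_\xi B_i)(\cdot,z)\nabla z$ is the classical one and closes the proof for all $p\ge 1$.
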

\begin{proof}
See Proposition 4.2 in \cite{Hofmanova2013} and Lemma \ref{Nemytskii_simple}.
\end{proof}

\begin{corollary}
Suppose the conditions of Theorem \ref{main_spde_result}. Then there exists a unique mild solution $u\in Z_{q, W^{1,p}(\mathbb{T}^N)}$ to the problem \eqref{main_eqn1}.
\end{corollary}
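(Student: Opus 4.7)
My plan is to invoke the abstract bootstrap result of Lemma \ref{abstract_statement_refined} with the pairing $X = L^p(\mathbb{T}^N)$ and $\tilde{X} = W^{1,p}(\mathbb{T}^N)$. The preceding corollary already supplies the unique mild solution $u \in Z_{q, L^p(\mathbb{T}^N)}$ to \eqref{main_eqn1}, so it remains only to check that the hypotheses of Lemma \ref{abstract_statement_refined} are satisfied in this refined setting. Its conclusion will then place $u$ inside $Z_{q, W^{1,p}(\mathbb{T}^N)}$ together with the estimate
\[
\|u\|_{Z_{q, W^{1,p}(\mathbb{T}^N)}}^q \leq C\bigl(1 + \mathbb{E}\|u_0\|_{W^{1,p}(\mathbb{T}^N)}^q\bigr).
\]

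First I would verify the structural requirements. The space $W^{1,p}(\mathbb{T}^N)$ is a $2$-smooth Banach space (recalled in Section 2) continuously embedded into $L^p(\mathbb{T}^N)$ via the identity; the hypothesis $u_0 \in L^q(\Omega, W^{m,p}(\mathbb{T}^N))$ of Theorem \ref{main_spde_result} together with $m \geq 1$ gives $u_0 \in L^q(\Omega, W^{1,p}(\mathbb{T}^N))$. The required $q$-growth bounds for $B : W^{1,p} \to \gamma(U, W^{1,p})$ and $F : W^{1,p} \to W^{1,p}$ follow from Lemma \ref{Nemytskii_sobolev} after raising to the $q$-th power.

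Next I would address the semigroup compatibility assumption. Using the identification $W^{1,p}(\mathbb{T}^N) \simeq D((-\Delta+1)^{1/2})$ recorded in the distributional regularity corollary, together with the commutation of $(-\Delta+1)^{1/2}$ with its own analytic semigroup $(S_t)_{t \geq 0}$ on $L^p(\mathbb{T}^N)$, one has for any $u \in W^{1,p}(\mathbb{T}^N)$
\[
\|S_t u\|_{W^{1,p}} \simeq \|(-\Delta+1)^{1/2} S_t u\|_{L^p} = \|S_t (-\Delta+1)^{1/2} u\|_{L^p} \leq \|(-\Delta+1)^{1/2} u\|_{L^p} \simeq \|u\|_{W^{1,p}},
\]
so the restriction $\tilde{S}_t := S_t|_{W^{1,p}}$ is a contraction semigroup on $W^{1,p}(\mathbb{T}^N)$; applying the same commutation to $S_t u - u$ transfers strong continuity from $L^p$ to $W^{1,p}$.

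With all hypotheses of Lemma \ref{abstract_statement_refined} verified, its conclusion delivers a unique mild solution in $Z_{q, W^{1,p}(\mathbb{T}^N)}$ satisfying the required estimate, while uniqueness transfers automatically from $Z_{q, L^p(\mathbb{T}^N)}$ through the continuous embedding $Z_{q, W^{1,p}} \hookrightarrow Z_{q, L^p}$ combined with the previous corollary. I do not anticipate a genuine obstacle here: the only mildly technical step is the semigroup compatibility check, which collapses to a one-line commutation of the Bessel operator with its analytic semigroup, so the proof amounts to a bookkeeping exercise assembling the abstract lemma with the Nemytskii-type result.
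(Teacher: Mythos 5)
Your proposal is correct and follows exactly the route the paper intends: the corollary is obtained by instantiating Lemma \ref{abstract_statement_refined} with $X=L^p(\mathbb{T}^N)$ and $\tilde{X}=W^{1,p}(\mathbb{T}^N)$ and feeding in the growth bounds supplied by Lemma \ref{Nemytskii_sobolev}, with uniqueness inherited from $Z_{q,L^p(\mathbb{T}^N)}$ via the continuous embedding. The extra checks you perform --- the semigroup restricting to a strongly continuous contraction semigroup on $W^{1,p}(\mathbb{T}^N)$ and the membership $u_0\in L^q(\Omega, W^{1,p}(\mathbb{T}^N))$ --- are precisely the bookkeeping the paper leaves implicit.
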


\subsubsection*{Mild solutions in $Z_{q, W^{m,p}(\mathbb{T}^N)}$}

Constructing mild solutions in $Z_{q, W^{1,p}(\mathbb{T}^N)}$ essentially relied on the fact that Nemytskii operators associated to the $C^1(\mathbb{T}^N\times \R)$  functions $B_1, \ldots, B_d$ and the $C^1(\R)$ function $F$ are of linear growth thanks to Lemma \ref{Nemytskii_simple}. This reasoning breaks down when considering Sobolev spaces of higher order (as required in order to be able to implement the Sobolev embedding theorem). Nontheless, one is able to recover (polynomial) growth conditions in this setting, which will turn out to be sufficient to again control Picard iterations in $Z_{q, W^{m,p}(\mathbb{T}^N)}$.

\begin{lemma}(Abstract statement)
\label{abstract_statement_2}
Suppose all conditions of Lemma  \ref{abstract_statement} are satisfied. Let $X_1, X_2$ be 2-smooth Banach spaces such that $X_1\hookrightarrow X$ the embedding operator being the identity and suppose $u_0\in L^q(\Omega, X_1)\cap L^{mq}(\Omega, X_2)$ for some $m\geq 1$. Suppose that $B$ seen as an operator $B:X_1\cap X_2\rightarrow \gamma(U, X_i)$ is well defined for both $i=1, 2$ and satisfies the growth conditions
\[
||B(u)||^q_{\gamma(U, X_1)}\leq C(1+||u||_{X_1}^{q}+||u||_{X_2}^{mq})
\]
and
\[
||B(u)||^q_{\gamma(U, X_2)}\leq C(1+||u||_{X_2}^q)
\]
for $u\in X_1\cap X_2$. 
Suppose that $F$ seen as an operator $F: X_1\cap X_2\rightarrow X_i$ is well defined for both $i=1,2$ and satisfies the growth conditions
\[
||F(u)||_{X_1}^q\leq C(1+||u||_{X_1}^q+||u||^{mq}_{X_2})
\]
and
\[
||F(u)||_{X_2}^q\leq C(1+||u||_{X_2}^q)
\]
for $u\in X_1\cap X_2$. Suppose moreover that $A|_{X_i}$ generates a strongly continuous contraction semigroup $(S^i_t)_t\subset L(X_i)$ such that $S_t|_{X_i}=S^i_t$ for both $i=1, 2$. Then the unique mild solution $u\in Z_{q, X}$ of Lemma \ref{abstract_statement} lies in $Z_{q, X_1}\cap Z_{mq, X_2}$, and is a continuous in time mild solution in $Z_{q, X_1}$.
\end{lemma}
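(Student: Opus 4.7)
The plan is a two-step bootstrap. First I would upgrade the unique mild solution from $Z_{q,X}$ to $Z_{mq,X_2}$, exploiting the pure linear growth of $B$ and $F$ in the $X_2$-norm; then I would use this $a~priori$ control to tame the cross-term $\norm{u}_{X_2}^{mq}$ in the $X_1$-growth hypothesis and promote $u$ to $Z_{q,X_1}$. The point is that the $X_1$-growth condition alone does not permit a closed Picard argument in $Z_{q,X_1}$, and the $X_2$-iteration must be run first to produce the needed decoupling.

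For the first step, I would apply Lemma \ref{abstract_statement_refined} with $\tilde{X}=X_2$ and with the exponent $q$ of that lemma replaced by $mq$; this is permissible since $mq>q>2/(1-\delta_1)$ and the proof of Lemma \ref{abstract_statement_refined} goes through verbatim. The required linear growth at exponent $mq$ follows from raising the standing hypothesis to the $m$-th power, using $(1+x)^m\leq C(1+x^m)$. Consequently $u\in Z_{mq,X_2}$, $u$ is the strong $Z_{mq,X_2}$-limit of the Picard iterates $(u^n)_n$ of the map $\mathcal{K}$ from Lemma \ref{abstract_statement}, and $(u^n)_n$ is uniformly bounded in $Z_{mq,X_2}$ by a constant $M$ depending only on $\mathbb{E}\norm{u_0}_{X_2}^{mq}$.

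For the second step, I would feed this uniform $X_2$-bound into the $X_1$-estimate of $\mathcal{K}(u^{n-1})$. Applying Theorem \ref{max_ineq} to the stochastic convolution with base space $X_1$ and exponent $q$ (using that $A|_{X_1}$ generates an analytic contraction semigroup), and the triangle inequality for Bochner integrals on the drift, the mixed growth conditions give a recurrence
\begin{equation*}
\norm{u^n}_{Z_{q,X_1}}^q \leq C\,\mathbb{E}\norm{u_0}_{X_1}^q + C\bigl(T^{q(1-\delta_0)}+T^{q/2(1-\delta_1)}\bigr)\bigl(1+M+\norm{u^{n-1}}_{Z_{q,X_1}}^q\bigr).
\end{equation*}
For $T$ sufficiently small the resulting geometric iteration closes and yields a uniform bound on $(u^n)_n$ in $Z_{q,X_1}$. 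By Banach--Alaoglu I would then extract a weak-$*$-convergent subsequence with limit $v\in Z_{q,X_1}$; the continuous embedding $X_1\hookrightarrow X$ implies the same subsequence converges weakly-$*$ in $Z_{q,X}$, and by uniqueness of weak-$*$ limits combined with the already-established strong $Z_{q,X}$-convergence of $(u^n)_n$ to $u$, necessarily $v=u$. Weak-$*$ lower semicontinuity of $\norm{\cdot}_{Z_{q,X_1}}$ then produces the quantitative estimate, and continuity of $t\mapsto u_t$ in $X_1$ follows from the continuous modification of the stochastic convolution in $X_1$ granted by Theorem \ref{max_ineq}, together with the standard continuity of the Bochner integral and of $(S^1_t)_t$. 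Extension to arbitrary finite $T$ proceeds by the concatenation argument of Lemma \ref{abstract_statement}.

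The chief obstacle is the coupled growth in the $X_1$-estimate: a direct iteration in $Z_{q,X_1}$ fails because the right-hand side involves $\norm{u^{n-1}}_{Z_{mq,X_2}}^{mq}$, which is not controlled by the $Z_{q,X_1}$-norm. The linear growth hypothesis in $X_2$ is precisely what permits a self-contained Picard iteration at the higher integrability exponent $mq$, producing the $a~priori$ constant $M$ that decouples the $X_1$-recursion into a scalar geometric one. Once this hierarchy is in place, the remainder of the argument is a faithful adaptation of the proof of Lemma \ref{abstract_statement_refined}.
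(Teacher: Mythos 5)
Your proposal is correct and follows essentially the same route as the paper: uniform bounds on the Picard iterates of $\mathcal{K}$ in both $Z_{mq,X_2}$ and $Z_{q,X_1}$ for $T$ small, weak-$*$ extraction via Banach--Alaoglu, identification of the limits with the $Z_{q,X}$ fixed point, and Lemma \ref{Banach_space_consistency} together with Theorem \ref{max_ineq} for the mild formulation and continuity. The only difference is presentational: you run the $X_2$-iteration first and feed the resulting constant $M$ into the $X_1$-recursion, which makes explicit the decoupling that the paper's terser proof leaves implicit when it asserts the simultaneous uniform bound.
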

\begin{proof}
Due to Lemma \ref{abstract_statement}, there exists a unique continuous solution in $Z_{q, X}$. Thanks to the growth condition imposed, one can check that $\mathcal{K}$ maps the space $Z_{q, X_1}\cap Z_{mq, X_2}$ onto itself, meaning the sequence of Picard iterations lies in $Z_{q, X_2}\cap Z_{mq, X_2}$. Due to the growth condition imposed on the Nemytskii operators, we obtain again a uniform bound on this sequence of Picard iterations in both $Z_{q, X_1}$ and $Z_{mq, X_2}$ for $T$ sufficiently small. This means there exist a weak* convergent subsequence of Picard iterations in $Z_{q, X_1}$ and a weak* convergent subsequence in $Z_{mq, X_2}$. Due to the same reasoning as in Lemma \ref{abstract_statement_refined}, both weak* limits need to coincide with the fixed point $u\in Z_{q, X}$ obtained from the application of Lemma \ref{abstract_statement}, i.e. the solution lies in $Z_{q, X_1}\cap Z_{mq, X_2}$. Exploiting Lemma \ref{Banach_space_consistency} we deduce that $u$ is a mild solution in $Z_{q, X_1}$ thanks to the continuous embedding $X_1\hookrightarrow X$. Continuity follows again from Theorem \ref{max_ineq}.
\end{proof}
\begin{lemma}(A Nemytskii operator type result for $W^{m,p}(\mathbb{T}^N)$)
\label{Nemytskii_sobolev_higher_order}
Let $U$ be a $d$-dimensional Hilbert space with orthonormal basis $(e_i)_{i\leq d}$. Let $B_1, \ldots, B_d\in C^m(\mathbb{T}^N\times \R)$ with bounded $m$-th derivative. 
Then the associated Nemytskii operator
\begin{align*}
   B: W^{1,mp}(\mathbb{T}^N)\cap W^{m,p}(\mathbb{T}^N)&\rightarrow \gamma(U, W^{m,p}(\mathbb{T}^N))\\
    z&\rightarrow \left(u\rightarrow \sum_{i=1}^dB_i(\cdot, z(\cdot))\langle u, e_i\rangle\right)
\end{align*}
is well defined and satisfies the  growth condition
\[
||B(z)||_{\gamma(U, W^{m,p}(\mathbb{T}^N))}\leq C(1+||z||_{W^{1,mp}(\mathbb{T}^N)}^m+||z||_{W^{m,p}(\mathbb{T}^N)})
\]
for all $p\geq 2$. Suppose $F\in C^m(\R)$ is of bounded $m$-th derivative. Then the associated Nemytskii operator
\begin{align*}
    F: W^{1, mp}(\mathbb{T}^N)\cap W^{m, p}(\mathbb{T}^N)&\rightarrow W^{m,p}(\mathbb{T}^N)\\
    u&\rightarrow F(u)
\end{align*}
is well defined and satisfies the growth condition
\[
||F(u)||_{W^{m,p}(\mathbb{T}^N)}\leq C\left( 1+||u||_{W^{m,p}(\mathbb{T}^N)}+||u||^m_{W^{1, mp}(\mathbb{T}^N)}\right)
\]
\end{lemma}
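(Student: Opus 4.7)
The plan is to prove the two estimates in parallel, reducing the $B$-bound to the $F$-type argument via the finite-dimensionality of $U$. The core analytic tool is Faà di Bruno's formula: for every multi-index $\alpha$ with $|\alpha|\leq m$,
\[
D^\alpha F(u)=\sum F^{(k)}(u)\prod_{j=1}^k D^{\gamma_j}u,
\]
the sum ranging over partitions of $\alpha$ with $1\leq k\leq|\alpha|$, $|\gamma_j|\geq 1$ and $\sum_{j=1}^k|\gamma_j|=|\alpha|$. Boundedness of the derivatives of $F$ up to order $m$ (as assumed in Theorem \ref{main_spde_result}) reduces the estimate to controlling $\norm{\prod_j D^{\gamma_j}u}_{L^p(\mathbb{T}^N)}$ for each such partition.

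Next I would apply Hölder with the weighted exponents $p_j:=mp/|\gamma_j|$, which satisfy $\sum_j 1/p_j=m/(mp)=1/p$, and then estimate each intermediate derivative by the Gagliardo--Nirenberg interpolation between $\norm{Du}_{L^{mp}}$ and $\norm{D^m u}_{L^p}$:
\[
\norm{D^j u}_{L^{mp/j}(\mathbb{T}^N)}\leq C\,\norm{u}_{W^{m,p}(\mathbb{T}^N)}^{(j-1)/(m-1)}\norm{u}_{W^{1,mp}(\mathbb{T}^N)}^{(m-j)/(m-1)},\qquad 1\leq j\leq m,
\]
the exponents being fixed by scaling once one insists on the Lebesgue index $mp/j$. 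Summing $(|\gamma_j|-1)/(m-1)$ and $(m-|\gamma_j|)/(m-1)$ over the partition yields total $W^{m,p}$-weight $(m-k)/(m-1)$ and total $W^{1,mp}$-weight $m(k-1)/(m-1)$, with $(m-k)/(m-1)+(k-1)/(m-1)=1$. A single application of Young's inequality with conjugate exponents $((m-1)/(m-k),(m-1)/(k-1))$ therefore converts each Faà di Bruno term into the additive form $C(\norm{u}_{W^{m,p}}+\norm{u}_{W^{1,mp}}^m)$. A direct zeroth-order bound on $F(u)$ via $|F(u)|\leq|F(0)|+\norm{F'}_\infty|u|$ together with $W^{1,mp}(\mathbb{T}^N)\hookrightarrow L^p(\mathbb{T}^N)$ completes the estimate for $F$.

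For $B$, the finite-dimensionality of $U$ together with the type-$2$ property of $W^{m,p}(\mathbb{T}^N)$ yields
\[
\norm{B(z)}_{\gamma(U,W^{m,p}(\mathbb{T}^N))}^2\leq C_d\sum_{i=1}^d\norm{B_i(\cdot,z(\cdot))}_{W^{m,p}(\mathbb{T}^N)}^2,
\]
reducing the problem to bounding each component $B_i(\cdot,z(\cdot))$ in $W^{m,p}$. Faà di Bruno in $x$ now produces sums of mixed partial derivatives $\partial_x^{\alpha_0}\partial_\xi^k B_i$ (bounded by hypothesis) multiplied by products of derivatives of $z$, and the Hölder--Gagliardo--Nirenberg--Young scheme of the previous paragraph applies verbatim.

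The main technical obstacle is verifying the precise Gagliardo--Nirenberg interpolation at the non-standard Lebesgue exponent $mp/j$ on $\mathbb{T}^N$, and then tuning the Young balance so that the resulting estimate matches exactly the structure $\norm{u}_{W^{m,p}}+\norm{u}_{W^{1,mp}}^m$ demanded by the statement, rather than a mixed product or a higher polynomial power. Once this balance is verified, the rest is pure multi-index combinatorics; the appearance of the $\norm{u}_{W^{1,mp}}^m$ term is precisely what forces the auxiliary $L^{mq}(\Omega,W^{1,mp}(\mathbb{T}^N))$-regularity on the initial data in Theorem \ref{main_spde_result}.
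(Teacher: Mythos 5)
Your proposal is correct, but it is doing something the paper deliberately does not: the paper's entire proof of this lemma is the citation ``See Proposition 4.3 in \cite{Hofmanova2013} and Lemma \ref{Nemytskii_simple}'', and Lemma \ref{Nemytskii_simple} in the appendix is itself only stated with a reference. What you have written is essentially the argument behind those citations, carried out explicitly: Fa\`a di Bruno to expand $D^\alpha F(u)$ into terms $F^{(k)}(u)\prod_j D^{\gamma_j}u$, H\"older with exponents $p_j=mp/|\gamma_j|$ (note $\sum_j 1/p_j=|\alpha|/(mp)\leq 1/p$, with equality only for $|\alpha|=m$; for $|\alpha|<m$ you need the finite measure of $\mathbb{T}^N$ to land back in $L^p$, which is fine), Gagliardo--Nirenberg at the exponent $mp/j$ with interpolation weight $\theta=(j-1)/(m-1)$ (your scaling check is consistent: $j-Nj/(mp)=\theta(m-N/p)+(1-\theta)(1-N/(mp))$), and Young's inequality with conjugate exponents $\bigl(\tfrac{m-1}{m-k},\tfrac{m-1}{k-1}\bigr)$ to produce exactly the additive bound $C(\norm{u}_{W^{m,p}}+\norm{u}_{W^{1,mp}}^m)$; the reduction of the $\gamma(U,W^{m,p})$-norm to $\sum_i\norm{B_i(\cdot,z)}_{W^{m,p}}^2$ via finite-dimensionality of $U$ and the type-$2$ property is also the right way to handle the radonifying norm. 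Two small remarks: first, your argument uses boundedness of \emph{all} derivatives of $F$ and $B_i$ up to order $m$ (to control the factors $F^{(k)}(u)$ and $\partial_x^{\alpha_0}\partial_\xi^k B_i$ for $k<m$), which is the hypothesis of Theorem \ref{main_spde_result} rather than the literal ``bounded $m$-th derivative'' of the lemma statement -- you are right to read the hypothesis that way, as the weaker literal reading would not suffice; second, Young's inequality is only needed for the intermediate partitions $1<k<m$, the extreme cases $k=1$ and $k=m$ giving the two summands directly. What your approach buys is a self-contained proof; what the paper buys by citing is brevity, at the cost of leaving the interpolation mechanics to \cite{Hofmanova2013}.
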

\begin{proof}
See Proposition 4.3 in \cite{Hofmanova2013} and Lemma \ref{Nemytskii_simple}.
\end{proof}
\begin{remark}
The preceding Lemma \ref{abstract_statement_2} whose proof is essentially based on Lemma \ref{Nemytskii_simple} conveys the reason why one has to leave the Hilbert space framework of stochastic integration and rather work in the setting of stochastic integration with values in 2-smooth Banach spaces: Since one needs to control norms in $W^{1,mp}(\mathbb{T}^N)$ and $W^{m,p}(\mathbb{T}^N)$ one leaves the Hilbert space setting $p=2$ as soon as one intends to consider orders higher than $m=1$. Note moreover that the preceding Lemma implies Lemma \ref{Nemytskii_sobolev}.
\end{remark}
Combing the results of this subsection, we conclude that under the conditions of Theorem \ref{main_spde_result} there exists a unique mild solution $u\in Z_{q, W^{m,p}(\mathbb{T}^N)}\cap Z_{mq, W^{1,mp}(\mathbb{T}^N)}$ to the problem \eqref{main_eqn1}. The bound stated in Theorem \ref{main_spde_result} follows again from weak-*-lower semicontinuity of the norms. In particular, by the Sobolev embedding theorem, this implies also that $u$ is a strong solution for $m\geq 3$.

\begin{remark}
Note that while the abstract existence and uniqueness statements of this section are formulated for general $U$-cylindrical Brownian motion, i.e. hold in the infinite dimensional setting, Lemmas \ref{Nemytskii}, \ref{Nemytskii_sobolev}, \ref{Nemytskii_sobolev_higher_order} concerning Nemytskii operator results crucially rely on $U$ being finite dimensional. 
\end{remark}

    \section{The critical equation $\delta_1=1$}
    We wish to study the critical problem\footnotemark\footnotetext{For easier reading, we suppress the additional nonlinear drift. }
   \begin{align}
       \begin{cases}
       du_t&= (\Delta-1) u_tdt+\mu(-\Delta+1)^{1/2}\sum_{i=1}^dB_i(u_t)d\beta^i_t\\
       u(0)&=u_0
       \label{ciritical_eqn}
       \end{cases}
   \end{align}
   which we will also occasionally write more compactly as 
    \begin{align*}
       \begin{cases}
       du_t&= (\Delta-1) u_tdt+\mu(-\Delta+1)^{1/2}B(u_t)dW_t\\
       u(0)&=u_0
       \end{cases}
   \end{align*}
 through approximation. 
   For $\delta\in [0,1)$ let $u^\delta$ be the unique mild solution to
   \begin{align*}
       \begin{cases}
       du_t&= (\Delta-1) u_tdt+\mu(-\Delta+1)^{\delta/2}\sum_{i=1}^dB_i(u_t)d\beta^i_t\\
       u(0)&=u_0
       \end{cases}
   \end{align*}
    in $Z_{q, W^{1, p}(\mathbb{T}^N)}$ where $\mu \in \R$ and $B_1, \ldots, B_d\in C^1(\mathbb{T}^N)$ have bounded derivative satisfying the growth condition stated in Theorem \ref{main_spde_result}.  In particular $(u^\delta)_{\delta\in [0,1)}\subset L^2(\Omega\times[0,T], W^{1,2}(\mathbb{T}^N))$. We show that for $\mu$ sufficiently small, $(u^\delta)_{\delta\in [0,1)}$ is uniformly bounded in this space. Together with a second uniform bound in a space of higher time and lower space regularity, this allows us to apply the stochastic compactness method due to Flandoli and Gatarek (refer to \cite{flandoli1995martingale} for the original article and the lecture notes \cite{hof_lec_notes} for a pedagogical introduction). Upon the establishment of uniform bounds in the aforementioned spaces, one is able to establish tightness of the laws of $(u^\delta)_\delta$, which in combination with Prokhorov's theorem and the Skorokhod representation theorem permits to conclude the existence of a martingale solution in the following sense.
    
    \begin{definition}
    We say the problem \eqref{ciritical_eqn} admits a martingale solution if there exists a filtered probability space $(\Omega', \mathcal{F}', (\mathcal{F}'_t)_t, \mathbb{P}')$, a $d$-dimensional $(\mathcal{F}'_t)_t$ Brownian motion $W'$ and a progressively measurable process $u:\Omega'\times [0,T]\to L^2(\mathbb{T}^N)$ such that  for some $\alpha>0$ we have $\mathbb{P}'$-almost surely
    \[
    u'\in C([0,T], H^{-\alpha}(\mathbb{T}^N))\cap L^2([0,T], W^{1,2}(\mathbb{T}^N))
    \]
    as well as for any $\varphi\in H^\alpha(\mathbb{T}^N)$
    \begin{equation*}
        \begin{split}
            \langle u_t, \varphi&=\langle u_0, \varphi\rangle+\int_0^t \langle (\Delta-1) u_s, \varphi\rangle ds +\int_0^t \langle (-\Delta+1)^{1/2}B(u_s), \varphi\rangle dW'_s.
        \end{split}
    \end{equation*}
    \end{definition}
    
    \bigskip
    
    \subsection{A first a priori bound}
    \begin{lemma}
There exists $\mu_0 \in \R$ such that for all $\mu^2 <\mu_0^2$, the family $(u^\delta)_{\delta\in [0,1)}$ is uniformly bounded in $L^2(\Omega\times[0,T], W^{1,2}(\mathbb{T}^N))$.
\label{a_priori_1}
\end{lemma}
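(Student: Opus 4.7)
The plan is to apply the variational Itô formula to $\|u^\delta_t\|_{L^2(\mathbb{T}^N)}^2$, exploiting the dissipativity of $\Delta-1$ on $W^{1,2}(\mathbb{T}^N)$, namely $\langle u, (\Delta-1)u\rangle_{L^2} = -\|u\|_{W^{1,2}}^2$, to absorb the Itô correction arising from the diffusion. Under the hypotheses of Theorem~\ref{main_spde_result} (taking $m$ large enough), $u^\delta$ has the regularity of a strong solution, and the variational Itô formula in the Gelfand triple $W^{1,2}(\mathbb{T}^N)\hookrightarrow L^2(\mathbb{T}^N) \hookrightarrow W^{-1,2}(\mathbb{T}^N)$ applies: the drift $(\Delta-1)u^\delta$ lies in $W^{-1,2}(\mathbb{T}^N)$, and the diffusion $(-\Delta+1)^{\delta/2} B(u^\delta)$ lies in $\gamma(U, L^2(\mathbb{T}^N))$ since $\delta<1$ ensures $H^{1-\delta,2}(\mathbb{T}^N)\hookrightarrow L^2(\mathbb{T}^N)$.

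Carrying out the Itô formula, and using the self-adjointness of $(-\Delta+1)^{\delta/2}$ on the cross term, yields
\[
d\|u^\delta_t\|_{L^2}^2 = -2\|u^\delta_t\|_{W^{1,2}}^2 dt + 2\mu \sum_{i=1}^d \langle (-\Delta+1)^{\delta/2} u^\delta_t, B_i(u^\delta_t)\rangle\,d\beta^i_t + \mu^2 \sum_{i=1}^d \|(-\Delta+1)^{\delta/2} B_i(u^\delta_t)\|_{L^2}^2 dt.
\]
The critical step is to bound the Itô correction \emph{uniformly} in $\delta\in[0,1)$. Decomposing spectrally along an $L^2$-eigenbasis of $-\Delta+1$ and applying Hölder's inequality with conjugate exponents $1/\delta$ and $1/(1-\delta)$ yields the interpolation estimate
\[
\|(-\Delta+1)^{\delta/2} f\|_{L^2}^2 \leq \|f\|_{W^{1,2}}^{2\delta}\,\|f\|_{L^2}^{2(1-\delta)}
\]
with constant exactly $1$. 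Applied to $f=B_i(u^\delta_t)$, combined with the linear-growth bounds $\|B_i(u)\|_{L^2}^2 \leq C(1+\|u\|_{L^2}^2)$ and $\|B_i(u)\|_{W^{1,2}}^2 \leq C(1+\|u\|_{W^{1,2}}^2)$ granted by Lemma~\ref{Nemytskii_sobolev}, together with Young's inequality $a^\delta b^{1-\delta} \leq \delta a + (1-\delta)b$ and the trivial $\|u\|_{L^2}^2 \leq \|u\|_{W^{1,2}}^2$, this produces a bound of the form
\[
\sum_{i=1}^d \|(-\Delta+1)^{\delta/2} B_i(u^\delta_t)\|_{L^2}^2 \leq Cd\,\bigl(1+\|u^\delta_t\|_{W^{1,2}}^2\bigr),
\]
with constant $Cd$ independent of $\delta\in[0,1)$.

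After a standard stopping-time localization to make the stochastic integral a true martingale (permissible thanks to the integrability of $u^\delta$ furnished by Theorem~\ref{main_spde_result}), taking expectations and passing to the limit yields
\[
\mathbb{E}\|u^\delta_t\|_{L^2}^2 + (2-Cd\mu^2)\,\mathbb{E}\int_0^t \|u^\delta_s\|_{W^{1,2}}^2\,ds \leq \mathbb{E}\|u_0\|_{L^2}^2 + Cd\mu^2 t.
\]
Setting $\mu_0^2 := 2/(Cd)$, the coefficient $2-Cd\mu^2$ is strictly positive for every $\mu^2<\mu_0^2$, and rearranging gives the claimed uniform bound on $(u^\delta)_{\delta\in[0,1)}$ in $L^2(\Omega\times[0,T], W^{1,2}(\mathbb{T}^N))$.

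The only conceptual obstacle is ensuring that the interpolation constant does \emph{not} blow up as $\delta\nearrow 1$: this is exactly what the spectral Hölder estimate with constant $1$ provides, and it is what makes the energy argument close uniformly up to the critical threshold. The remaining ingredients --- the rigorous justification of the variational Itô formula and the stopping-time localization --- are routine consequences of the regularity of $u^\delta$ afforded by Theorem~\ref{main_spde_result}.
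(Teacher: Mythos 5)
Your proposal is correct and follows essentially the same route as the paper: Itô's formula for $\|u^\delta_t\|_{L^2}^2$, dissipativity of $\Delta-1$, a $\delta$-uniform bound on the Itô correction via the $W^{1,2}$-linear-growth of the Nemytskii operator, and absorption into the drift for $\mu^2<2/C$. The only cosmetic difference is that your spectral interpolation plus Young's inequality is an unnecessary detour --- since $-\Delta+1\geq 1$, one has $\|(-\Delta+1)^{\delta/2}f\|_{L^2}\leq\|(-\Delta+1)^{1/2}f\|_{L^2}$ directly for all $\delta\in[0,1)$, which is exactly the bound the paper uses and to which your estimate reduces.
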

\begin{proof}
By Theorem \ref{main_spde_result}, solutions $u^\delta$ take values in $Z_{q, W^{1, 2}(\mathbb{T}^N)}$ for $q>2/(1-\delta)$.
Applying Itô's formula in the Hilbert space $L^2(\mathbb{T}^N)$, one obtains
    \begin{align*}
    ||u_t^\delta||^2_{L^2(\mathbb{T}^N)}&=||u_0||^2_{L^2(\mathbb{T}^N)}-2\int_0^t \int_{\mathbb{T}^N}(|u^\delta_s|^2+|\nabla u^\delta_s|^2) dxds+\mbox{Martingale}\\
    &+\mu^2 \int_0^t\sum_{i=1}^d \int_{\mathbb{T}^N}| (-\Delta+1)^{\delta/2}B_i(u_s^\delta)|^2dxds
    \end{align*}
    \noindent
Note that since 
\[
||(-\Delta+1)^{\delta/2}v||_{L^2(\mathbb{T}^N)}\leq ||(-\Delta+1)^{1/2}v||_{L^2(\mathbb{T}^N)}
\]
for $v \in D((-\Delta+1)^{1/2})$ 
we have
\begin{align*}
    \mathbb{E}[||u_t^\delta||^2_{L^2(\mathbb{T}^N)}]&\leq\mathbb{E}[ ||u_0||^2_{L^2(\mathbb{T}^N)}]-2\mathbb{E}[\int_0^t (\norm{u^\delta_s}_{L^2(\mathbb{T}^N)}^2+||\nabla u^\delta_s||_{L^2(\mathbb{T}^N)}^2)ds]\\
    &+\mu^2 \mathbb{E}[\int_0^t || \sum_{i=1}^d(-\Delta+1)^{1/2}B_i(u_s^\delta)||_{L^2(\mathbb{T}^N)}^2ds]
\end{align*}
Note that we have moreover
\[
\left( D((-\Delta+1)^{1/2}), ||(-\Delta+1)^{1/2} \cdot||_{L^p(\mathbb{T}^N)}\right)\simeq \left( H^{1,2}(\mathbb{T}^N), ||\cdot||_{H^{1,2}(\mathbb{T}^N)}\right).
\]
Defining the Nemyskii operator 
\begin{align*}
    b: W^{1,2}(\mathbb{T}^N) &\rightarrow W^{1,2}(\mathbb{T}^N)\\
    u &\rightarrow \sum_{i=d}^dB_i(u),
\end{align*}
Lemma \ref{Nemytskii_simple} implies the growth condition 
\[
||b(u)||_{W^{1,2}(\mathbb{T}^N)}^2\leq C (1+||u||_{W^{1,2}(\mathbb{T}^N)}^2).
\]
Combining these considerations, we obtain
\begin{align*}
    || (-\Delta+1)^{1/2}\sum_{i=1}^dB_i(u_s^\delta)||_{L^2(\mathbb{T}^N)}^2&\leq C|| \sum_{i=1}^dB_i(u_s^\delta)||_{H^{1,2}(\mathbb{T}^N)}^2=||b(u_s^\delta)||^2_{W^{1,2}(\mathbb{T}^N)}\\
    &\leq C(1+||u_s^\delta||^2_{W^{1,2}(\mathbb{T}^N)})\\
    &= C\left(1+\norm{u^\delta_s}_{L^2(\mathbb{T}^N)}^2+||\nabla u_s^\delta||^2_{L^{2}(\mathbb{T}^N)}\right).
\end{align*}{}
Returning to the above application of Itô's formula, one has
\begin{align*}
    &\mathbb{E}[||u_t^\delta||^2_{L^2(\mathbb{T}^N)}]\\
    \leq&\mathbb{E}[ ||u_0||^2_{L^2(\mathbb{T}^N)}]+(C\mu^2-2)\mathbb{E}[\int_0^t (1+\norm{u^\delta_s}_{L^2(\mathbb{T}^N)}^2+||\nabla u_s^\delta||_{L^2(\mathbb{T}^N)}^2)ds]
\end{align*}{}
and thus in particular for $\mu_0=\sqrt{2/C}$ the result follows.
\end{proof}
\begin{remark}
We stress that in the previous proof, we exploited that $u^\delta$ takes values in the space $W^{1, 2}(\mathbb{T}^N)$, providing a uniform bound in $L^2(\Omega\times [0,T]\times W^{1,2}(\mathbb{T}^N))$. As seen in Theorem \ref{main_spde_result}, space regularity of $u^\delta$ can be improved to  $W^{m, 2}(\mathbb{T}^N)$ provided that the functions $(B_i)_{i\leq d}$ and the initial condition are sufficiently regular. It would be tempting to exploit this information to derive a uniform bound of $(u^\delta)_\delta$ in $L^2(\Omega\times [0,T]\times W^{m-1, 2}(\mathbb{T}^N))$ for $m\geq 2$. Note however that similarly to the third step to the proof of Theorem \ref{main_spde_result},  the operator
\begin{align*}
    b_m: W^{m,2}(\mathbb{T}^N) &\rightarrow W^{m,2}(\mathbb{T}^N)\\
    u &\rightarrow \sum_{i=d}^dB_i(u)
\end{align*}
is well defined only for $m=1$. Referring to Lemma \ref{Nemytskii_simple}, one might instead consider the operator 
\begin{align*}
    \tilde{b}_m: W^{m,2}(\mathbb{T}^N)\cap W^{1,2m}(\mathbb{T}^N)  &\rightarrow W^{m,2}(\mathbb{T}^N)\cap W^{1,2m}(\mathbb{T}^N)\\
    u &\rightarrow \sum_{i=d}^dB_i(u)
\end{align*}
with its corresponding growth condition. Notice however that this bound doesn't allow for a control of the unbounded diffusion by the drift as the norms of two different Sobolev spaces appear, preventing the establishment of uniform bounds in $L^2(\Omega\times [0,T]\times W^{m-1, 2}(\mathbb{T}^N))$ for $m\geq 2$ by a generalization of the proof strategy. 
\end{remark}
    
    \subsection{A second a priori bound}

    We begin by recalling the following Lemma due to \cite{flandoli1995martingale}.
    \begin{lemma}
    Let $p\geq 2$, $\alpha<1/2$. Let $U, H$ be separable Hilbert spaces and $W$ a $U$-cylindrical Brownian motion. Then for any progressively measurable process
    \[
    f\in L^p(\Omega\times [0,T], L_2(U, H))
    \]
    it holds that 
    \[
    I(f):=\int_0^{(\cdot)}f_sdW_s\in L^p(\Omega, W^{\alpha,p}([0,T], H))
    \]
    and there exists a constant $C=C(\alpha,p)>0$ independent of $f$ such that 
    \[
    \mathbb{E}\left[\norm{I(f)}^p_{W^{\alpha, p}([0,T], H)}\right]\leq C \mathbb{E}\left[\int_0^T\norm{f_s}^p_{L_2(U,H)}ds\right].
    \]
    \label{flandoli_frac}
    \end{lemma}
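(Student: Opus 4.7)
The plan is to use the Sobolev–Slobodeckij characterisation of $W^{\alpha,p}([0,T],H)$, whose norm is the sum of $\norm{\cdot}_{L^p([0,T],H)}$ and the Gagliardo seminorm
\[
[g]^p_{\alpha,p} := \int_0^T\int_0^T \frac{\norm{g(t)-g(s)}^p_H}{|t-s|^{1+\alpha p}}\,ds\,dt.
\]
The $L^p$ part of the norm is handled directly: for any $t\in[0,T]$ the Burkholder--Davis--Gundy inequality in the Hilbert space $H$, combined with Hölder's inequality in time (valid since $p\geq 2$), gives
\[
\mathbb{E}\norm{I(f)(t)}^p_H \leq C\,\mathbb{E}\left(\int_0^t \norm{f_r}^2_{L_2(U,H)}\,dr\right)^{p/2} \leq C\,T^{p/2-1}\,\mathbb{E}\int_0^T \norm{f_r}^p_{L_2(U,H)}\,dr,
\]
so that integrating over $t\in[0,T]$ yields the desired control by $\mathbb{E}\int_0^T\norm{f_r}^p\,dr$.

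For the seminorm I would exploit that $I(f)(t)-I(f)(s)=\int_s^t f_r\,dW_r$ for $s<t$ is itself an Itô integral, so the same BDG-plus-Hölder chain yields
\[
\mathbb{E}\norm{I(f)(t)-I(f)(s)}^p_H \leq C\,(t-s)^{p/2-1}\,\mathbb{E}\int_s^t\norm{f_r}^p_{L_2(U,H)}\,dr.
\]
Plugging this estimate in and exploiting symmetry in $s,t$, the computation of $\mathbb{E}[I(f)]^p_{\alpha,p}$ reduces to bounding
\[
C\int_0^T\int_0^t (t-s)^{p/2-2-\alpha p}\,\mathbb{E}\int_s^t \norm{f_r}^p\,dr\,ds\,dt.
\]
Switching the order of integration via Fubini (writing the triple integral over $\{0<s<r<t<T\}$ with $r$ as the outermost variable) pulls the $f$-factor out of the deterministic weights, leaving the task of verifying that
\[
\sup_{r\in[0,T]}\int_0^r\int_r^T (t-s)^{p/2-2-\alpha p}\,dt\,ds <\infty.
\]

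The main obstacle is exactly this last uniform bound, and it is where the hypothesis $\alpha<1/2$ is used. Writing $\beta := p/2 - 2 - \alpha p$, the assumption $\alpha<1/2$ is equivalent to $\beta>-2$. I would split into cases according to the sign of $\beta+1$: when $\beta>-1$ the inner $t$-integral is bounded by $T^{\beta+1}/(\beta+1)$ and a further integration in $s$ produces a constant; when $\beta\leq -1$ the inner integral blows up like $(r-s)^{\beta+1}/|\beta+1|$ as $s\to r^-$, but $\beta+1>-1$ keeps this singularity integrable in $s\in(0,r)$, again yielding a finite constant depending only on $\alpha$, $p$, and $T$. Combining the estimates for the two parts of the Sobolev norm then gives the claimed inequality.
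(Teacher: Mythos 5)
Your argument is correct and is essentially the standard one: the paper does not prove this lemma but simply cites Flandoli--Gatarek, whose Lemma 2.1 is established by exactly this chain of Burkholder--Davis--Gundy, H\"older in time (using $p\geq 2$), and Tonelli/Fubini applied to the Gagliardo seminorm, with $\alpha<1/2$ entering precisely as your condition $\beta=p/2-2-\alpha p>-2$. The only cosmetic point is the borderline case $\beta=-1$, where the inner $t$-integral is logarithmic rather than of the form $(r-s)^{\beta+1}/|\beta+1|$, but it is still integrable in $s$, so the conclusion is unaffected.
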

    While we can not apply this Lemma in the above context for $H=L^2(\mathbb{T}^N)$ due to the unbounded operator in front of the diffusion term, we can still obtain a bound in a suitable Bessel potential space of distributions, which will turn out to be sufficient for the stochastic compactness method.   
    
    \begin{lemma}
    Let $u\in L^2(\Omega\times [0,T], L^2(\mathbb{T}^N))$ be a progressively measurable process. Then for $B_1, \ldots, B_d\in C^1(\mathbb{T}^N)$ having bounded derivatives,    we have that 
    \[
   I(u):= \int_0^{(\cdot)}(-\Delta+1)^{\delta/2} \sum_{i=1}^d B_i(u_s)d\beta^i_s\in L^2(\Omega, W^{\alpha, 2}([0,T], H^{-1}(\mathbb{T}^N)))
    \]
    and moreover
     \[
     \mathbb{E}\left[\norm{I(u)}^2_{W^{\alpha, 2}([0,T], H^{-1}(\mathbb{T}^N))}\right]\leq C \mathbb{E}\left[\int_0^T\norm{u_s}^2_{L^2(\mathbb{T}^N)}ds\right]
    \]
    \label{flandoli_frac2}
    \end{lemma}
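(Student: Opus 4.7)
The plan is to reduce the statement to a direct application of Lemma \ref{flandoli_frac}, the main point being the choice of the ambient Hilbert space $H$: we want $H$ large enough that the unbounded operator $(-\Delta+1)^{\delta/2}$ produces a bounded integrand when composed with the Nemytskii map. The natural choice is $H=H^{-1}(\mathbb{T}^N)$ together with $p=2$.

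First, I would rewrite $I(u)=\int_0^{(\cdot)} f_s \, dW_s$, where $W$ is the $U$-cylindrical Wiener process associated with $(\beta^i)_{i\leq d}$ on the $d$-dimensional Hilbert space $U=\mathrm{span}(e_1,\ldots,e_d)$, and the integrand $f_s\in L_2(U,H^{-1}(\mathbb{T}^N))$ is defined by $f_s(e_i)=(-\Delta+1)^{\delta/2}B_i(u_s)$. Since $U$ is finite-dimensional, the Hilbert--Schmidt norm collapses to
\[
\norm{f_s}_{L_2(U,H^{-1}(\mathbb{T}^N))}^2=\sum_{i=1}^d \norm{(-\Delta+1)^{\delta/2}B_i(u_s)}_{H^{-1}(\mathbb{T}^N)}^2.
\]

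Second, I would estimate each summand using the Bessel potential identification recalled earlier in the paper, which, applied to the composition $(-\Delta+1)^{-1/2}(-\Delta+1)^{\delta/2}=(-\Delta+1)^{(\delta-1)/2}$, gives $\norm{(-\Delta+1)^{\delta/2}v}_{H^{-1}(\mathbb{T}^N)}\simeq \norm{v}_{H^{\delta-1,2}(\mathbb{T}^N)}$. Since $\delta<1$ means $\delta-1<0$, there is a continuous embedding $L^2(\mathbb{T}^N)\hookrightarrow H^{\delta-1,2}(\mathbb{T}^N)$, hence
\[
\norm{(-\Delta+1)^{\delta/2}B_i(u_s)}_{H^{-1}(\mathbb{T}^N)}\leq C\,\norm{B_i(u_s)}_{L^2(\mathbb{T}^N)}.
\]
Combining this with the pointwise bound $|B_i(\xi)|\leq C(1+|\xi|)$ coming from the boundedness of the derivative of $B_i$ yields
\[
\norm{f_s}_{L_2(U,H^{-1}(\mathbb{T}^N))}^2\leq C\bigl(1+\norm{u_s}_{L^2(\mathbb{T}^N)}^2\bigr).
\]
Progressive measurability of $f$ is inherited from that of $u$ via the continuity of the Nemytskii map and of $(-\Delta+1)^{\delta/2}\colon L^2\to H^{-1}$, and the hypothesis $u\in L^2(\Omega\times[0,T],L^2(\mathbb{T}^N))$ together with the finite horizon $T$ then guarantees $f\in L^2(\Omega\times[0,T],L_2(U,H^{-1}(\mathbb{T}^N)))$.

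Finally, I would invoke Lemma \ref{flandoli_frac} with $p=2$ and $H=H^{-1}(\mathbb{T}^N)$, which directly yields $I(u)\in L^2(\Omega,W^{\alpha,2}([0,T],H^{-1}(\mathbb{T}^N)))$ together with the claimed estimate (with the constant $1$ from the affine Nemytskii bound absorbed into $C$ over the finite horizon). The only real obstacle is the opening observation that working in $H^{-1}$ rather than $L^2$ renders $(-\Delta+1)^{\delta/2}$ bounded for every $\delta\in[0,1)$; once this reduction is set up, the argument is purely bookkeeping on top of the cited lemma.
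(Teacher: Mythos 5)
Your proposal is correct and follows essentially the same route as the paper: the paper's (one-sentence) proof likewise combines the linear-growth Nemytskii bound, the uniform boundedness of $(-\Delta+1)^{\delta/2}\colon L^2(\mathbb{T}^N)\to H^{-1}(\mathbb{T}^N)$ for $\delta\in[0,1]$, the Hilbert--Schmidt structure of the integrand, and an application of Lemma \ref{flandoli_frac} with $p=2$ and $H=H^{-1}(\mathbb{T}^N)$. You have merely written out the details the paper leaves implicit, including the same harmless absorption of the additive constant from the affine growth bound.
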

    \begin{proof}
    The proof follows immediately from the Nemytskii operator result of Lemma \ref{Nemytskii}, the previous Lemma \ref{flandoli_frac} as well as the fact that the family of operators $(D_\delta)_{\delta\in [0,1]}$ defined via
  \begin{equation*}
      \begin{split}
          D_\delta: L^2(\mathbb{T}^N)&\to H^{-1}(\mathbb{T}^N)\\
          v&\to (-\Delta+1)^{\delta/2}v
      \end{split}
  \end{equation*}
  is uniformly bounded and the ideal property of Hilbert-Schmidt operators.
    \end{proof}
    In combination with the first a priori bound of Lemma \ref{a_priori_1}, this permits the derivation of a second one.
    \begin{lemma}
The family $(u^\delta)_\delta$ is uniformly bounded in $ L^2(\Omega, W^{\alpha, 2}([0,T], H^{-1}(\mathbb{T}^N)))$. 
\label{a_priori_2}
    \end{lemma}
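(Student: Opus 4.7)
The plan is to pass from the mild formulation of $u^\delta$ to its weak-in-space (strong) counterpart in $H^{-1}(\mathbb{T}^N)$, and then to bound each resulting summand in $W^{\alpha, 2}([0,T], H^{-1}(\mathbb{T}^N))$ separately. By Theorem \ref{main_spde_result} together with Lemma \ref{a_priori_1}, the mild solution $u^\delta$ takes values in $W^{1,2}(\mathbb{T}^N)$ with uniform-in-$\delta$ control on its $L^2(\Omega \times [0,T], W^{1,2}(\mathbb{T}^N))$-norm. Under such regularity the mild formulation is equivalent to the identity
\[
u^\delta_t = u_0 + \int_0^t (\Delta-1) u^\delta_s \, ds + \mu \int_0^t (-\Delta+1)^{\delta/2} B(u^\delta_s) \, dW_s,
\]
valid $\mathbb{P}$-almost surely for every $t \in [0,T]$ and understood in $H^{-1}(\mathbb{T}^N)$.

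Given this decomposition, I would estimate each of the three contributions in $W^{\alpha, 2}([0,T], H^{-1}(\mathbb{T}^N))$. The initial condition $u_0$ is constant in time, so its norm is trivially bounded by $C_T \mathbb{E}\norm{u_0}_{L^2(\mathbb{T}^N)}^2$, independently of $\delta$. For the Bochner drift, I would use that $\Delta - 1 : W^{1,2}(\mathbb{T}^N) \to H^{-1}(\mathbb{T}^N)$ is a bounded operator, so that Lemma \ref{a_priori_1} yields a uniform bound of $s \mapsto (\Delta-1) u^\delta_s$ in $L^2(\Omega \times [0,T], H^{-1}(\mathbb{T}^N))$. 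Its primitive then lies uniformly in $L^2(\Omega, W^{1,2}([0,T], H^{-1}(\mathbb{T}^N)))$, and the continuous embedding $W^{1,2}([0,T]) \hookrightarrow W^{\alpha, 2}([0,T])$ for $\alpha < 1/2$ completes this contribution. For the stochastic integral, a direct application of Lemma \ref{flandoli_frac2} to $u^\delta \in L^2(\Omega \times [0,T], L^2(\mathbb{T}^N))$ yields
\[
\mathbb{E}\left[\norm{\mu \int_0^{\cdot}(-\Delta+1)^{\delta/2}B(u^\delta_s) dW_s}_{W^{\alpha, 2}([0,T], H^{-1}(\mathbb{T}^N))}^2\right] \leq C\mu^2 \mathbb{E}\left[\int_0^T \norm{u^\delta_s}_{L^2(\mathbb{T}^N)}^2 ds\right],
\]
with constant independent of $\delta$ by the uniform boundedness of $(D_\delta)_{\delta \in [0,1]}$ built into Lemma \ref{flandoli_frac2}; the right-hand side is again uniformly bounded by Lemma \ref{a_priori_1}.

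Summing the three estimates via the triangle inequality in $W^{\alpha, 2}([0,T], H^{-1}(\mathbb{T}^N))$ yields the claimed uniform bound. The one step that requires care is the transition from the mild to the weak-in-space formulation in the presence of the unbounded operator $(-\Delta+1)^{\delta/2}$ in front of the diffusion; this is where we must verify that the integrand $(-\Delta+1)^{\delta/2} B(u^\delta_s)$ defines a genuine $H^{-1}(\mathbb{T}^N)$-valued stochastic integral, and that the standard argument identifying mild and strong formulations in analytic-semigroup SPDEs extends to this $\delta$-dependent setting. This is precisely what Lemma \ref{flandoli_frac2} provides, so no new technical difficulty arises.
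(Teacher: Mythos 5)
Your proposal is correct and follows essentially the same route as the paper: write $u^\delta$ in its integral (strong) form, bound the drift term in $W^{1,2}([0,T],H^{-1}(\mathbb{T}^N))$ via the boundedness of $\Delta-1:W^{1,2}(\mathbb{T}^N)\to H^{-1}(\mathbb{T}^N)$ and Lemma \ref{a_priori_1}, and apply Lemma \ref{flandoli_frac2} to the stochastic term. Your explicit flagging of the mild-to-strong transition is a point the paper glosses over, but it introduces no new difficulty here.
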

    \begin{proof}
    As $u^\delta$ is strong solution to 
    \[
    u_t^\delta=u_0+\int_0^t (-\Delta+1)u^\delta_sds+\mu \int_0^t (-\Delta+1)^{\delta/2}\sum_{i=1}^dB_i(u^\delta_s)d\beta^i_s
    \]
    we need to bound the three terms on the right hand side.        It is immediate that 
    \[
    \mathbb{E}\left[ \norm{\int_0^{(\cdot)} (-\Delta+1)u^\delta_sds }^2_{W^{1,2}([0,T], H^{-1}(\mathbb{T}^N))}\right]\leq C\mathbb{E}\left[\norm{u^\delta}^2_{L^2([0,T], H^{1}(\mathbb{T}^N))}\right]<\infty.
    \]
Moreover, by Lemma \ref{flandoli_frac2}
    \begin{equation*}
        \begin{split}
            &\mathbb{E}\left[ \norm{\int_0^{(\cdot)}(-\Delta+1)^{\delta/2} \sum_{i=1}^d B_i(u^\delta_s)d\beta^i_s}_{W^{\alpha, 2}([0,T], H^{-1}(\mathbb{T}^N)}^2\right]\\
            \leq& C \mathbb{E}[\norm{u^\delta}_{L^2([0,T]\times \mathbb{T}^N)}^2]<\infty
        \end{split}
    \end{equation*}
    Overall, this shows that indeed,
    \[
    \mathbb{E}\left[ \norm{u^\delta}^2_{W^{\alpha, 2}([0,T], H^{-1}(\mathbb{T}^N))}\right]<\infty.
    \]
        \end{proof}
        \subsection{Tightness}
        We next show that the laws of $(u^\delta)_\delta$ are tight in $L^2([0,T]\times \mathbb{T}^N)\cap C([0,T], H^{-(1+\epsilon)}(\mathbb{T}^N))$. In order to do so, we use the following result, which is central to the argument in \cite{flandoli1995martingale}.
        \begin{theorem}
        Let $B_0, B, B_1$ be Banach spaces satisfying the following embedding property
        \[
        B_0\hookrightarrow \hookrightarrow B\hookrightarrow B_1,
        \]
        where by $B_0\hookrightarrow\hookrightarrow B$ we understand that the embedding $B_0\hookrightarrow B$ is compact. Then for $p\in (1, \infty)$ and $\alpha\in (0,1)$, one has the following compact embedding
        \[
        L^p([0,T], B_0)\cap W^{\alpha, p}([0,T], B_1)\hookrightarrow\hookrightarrow L^p([0,T], B).
        \]
        Suppose moreover that 
        \[
        \alpha p>1,
        \]
        then we also have the compact embedding
        \[
        W^{\alpha, p}([0,T], B_0)\hookrightarrow\hookrightarrow C([0,T], B).
        \]
        \label{embeddings}
        \end{theorem}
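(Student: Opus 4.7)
The plan is to prove the two compact embeddings separately by standard arguments: the first via the Ehrling--Lions interpolation lemma combined with the compactness of the fractional Sobolev embedding on an interval, and the second via a Morrey-type embedding combined with Arzel\`a--Ascoli.

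For the first claim, I would take a bounded sequence $(u_n)$ in $L^p([0,T], B_0)\cap W^{\alpha, p}([0,T], B_1)$ and aim to extract a subsequence that is Cauchy in $L^p([0,T], B)$. The compact embedding $B_0\hookrightarrow\hookrightarrow B$ together with $B\hookrightarrow B_1$ yields Ehrling's inequality: for every $\eta>0$ there exists $C_\eta>0$ such that
\[
\norm{v}_B\leq \eta\norm{v}_{B_0}+C_\eta\norm{v}_{B_1},\qquad v\in B_0.
\]
Applied pointwise in $t$ and integrated in the $L^p$ norm, this gives
\[
\norm{u_n-u_m}_{L^p([0,T], B)}\leq \eta \norm{u_n-u_m}_{L^p([0,T], B_0)}+C_\eta \norm{u_n-u_m}_{L^p([0,T], B_1)}.
\]
The first term on the right is made uniformly small thanks to the bound in $L^p([0,T], B_0)$; the second is absorbed via the compact embedding $W^{\alpha, p}([0,T], B_1)\hookrightarrow\hookrightarrow L^p([0,T], B_1)$, a fractional analogue of Rellich's theorem in one variable that can be obtained from the Kolmogorov--Riesz compactness criterion, whose required control of time translations is furnished by the Gagliardo--Slobodeckij seminorm. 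Extracting a subsequence convergent in $L^p([0,T], B_1)$ and letting $\eta\to 0$ along a diagonal argument yields the Cauchy property.

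For the second claim, under the condition $\alpha p>1$, I would invoke the Morrey-type embedding for fractional Sobolev spaces on an interval,
\[
W^{\alpha, p}([0,T], B_0)\hookrightarrow C^{0, \alpha-1/p}([0,T], B_0),
\]
which follows directly from an estimate of the Gagliardo seminorm. A bounded sequence in $W^{\alpha, p}([0,T], B_0)$ is therefore uniformly bounded in $B_0$ at each $t\in [0,T]$ and equicontinuous as a family of $B_0$-valued functions. The compact embedding $B_0\hookrightarrow\hookrightarrow B$ upgrades pointwise boundedness in $B_0$ to pointwise relative compactness in $B$, while equicontinuity into $B_0$ descends to equicontinuity into $B$ via the continuous embedding $B_0\hookrightarrow B$. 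Arzel\`a--Ascoli then produces a subsequence converging in $C([0,T], B)$.

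The main obstacle is the careful justification, in the Banach-space-valued setting, of the two fractional-time facts used as black boxes above: the compactness of $W^{\alpha, p}([0,T], B_1)\hookrightarrow L^p([0,T], B_1)$ and the Morrey-type embedding $W^{\alpha, p}([0,T], B_0)\hookrightarrow C^{0,\alpha-1/p}([0,T], B_0)$. Both are established in \cite{flandoli1995martingale} and can alternatively be deduced from Simon's results on compactness in $L^p(0,T;B)$; rather than reprove them I would simply cite these references and focus the exposition on the two-line Ehrling-plus-interpolation argument that combines them into the stated compact embeddings.
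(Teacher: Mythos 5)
First, a remark on the comparison itself: the paper does not prove this statement at all — it is quoted as a known result, "central to the argument in \cite{flandoli1995martingale}" (Theorems 2.1 and 2.2 there) — so any proof you supply is necessarily a departure from the paper. Your treatment of the second embedding is correct: the Morrey-type embedding $W^{\alpha,p}([0,T],B_0)\hookrightarrow C^{0,\alpha-1/p}([0,T],B_0)$ does hold for arbitrary Banach targets when $\alpha p>1$, and combining the resulting uniform H\"older equicontinuity (into $B_0$, hence into $B$) with pointwise relative compactness in $B$ coming from $B_0\hookrightarrow\hookrightarrow B$, the vector-valued Arzel\`a--Ascoli theorem gives the claim.

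The first embedding, however, contains a genuine gap. The black box you invoke — compactness of $W^{\alpha,p}([0,T],B_1)\hookrightarrow L^p([0,T],B_1)$ — is false whenever $B_1$ is infinite dimensional: take $B_1=\ell^2$ and $u_n(t)\equiv e_n$; this sequence is bounded in $W^{\alpha,p}([0,T],\ell^2)$ (its Gagliardo seminorm vanishes) yet $\norm{u_n-u_m}_{L^p([0,T],\ell^2)}=2^{1/2}\,T^{1/p}$ for all $n\neq m$, so no subsequence converges. The Kolmogorov--Riesz criterion in the vector-valued setting requires, besides control of time translations, relative compactness in the target space of suitable time averages, and that is precisely what time regularity alone cannot supply; accordingly, this embedding is not established in \cite{flandoli1995martingale} either, and citing it does not close the gap. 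The correct argument (Simon's compactness theorem, or Theorem 2.1 of \cite{flandoli1995martingale}) uses the two hypotheses in tandem: the bound in $L^p([0,T],B_0)$ together with $B_0\hookrightarrow\hookrightarrow B$ furnishes the required relative compactness in $B$ of the averages $\frac{1}{t_2-t_1}\int_{t_1}^{t_2}u\,dt$, while the bound in $W^{\alpha,p}([0,T],B_1)$, fed through Ehrling's inequality exactly as you set it up, furnishes the uniform smallness of $\norm{\tau_h u-u}_{L^p([0,T-h],B)}$ as $h\to 0$. So your Ehrling step is the right tool, but it must be used to transfer the translation estimate from $B_1$ to $B$, not to reduce the problem to a compact embedding that does not exist.
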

        \begin{lemma}
        The laws of $(u^\delta)_{\delta\in [0,1)}$ are tight in $L^2([0,T]\times \mathbb{T}^N)\cap C([0,T], H^{-(1+\epsilon)}(\mathbb{T}^N))$ for any $\epsilon>0$.
        \end{lemma}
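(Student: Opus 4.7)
The plan is to construct, for each $\eta>0$, a single set $K_\eta$ that is relatively compact in both $L^2([0,T]\times\mathbb{T}^N)$ and $C([0,T], H^{-(1+\epsilon)}(\mathbb{T}^N))$ such that $\mathbb{P}(u^\delta\in K_\eta)\geq 1-\eta$ uniformly in $\delta$. Since both target spaces are Polish and tightness in their intersection is equivalent to simultaneous tightness in each factor, I would handle the two factors in turn, relying in both cases on Theorem \ref{embeddings} together with the uniform a priori bounds of Lemmas \ref{a_priori_1} and \ref{a_priori_2}.

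For the factor $L^2([0,T]\times\mathbb{T}^N)$, combining Lemmas \ref{a_priori_1} and \ref{a_priori_2} gives that, for any fixed $\alpha\in(0,1/2)$, the family $(u^\delta)$ is uniformly bounded in $L^2(\Omega;L^2([0,T],W^{1,2}(\mathbb{T}^N))\cap W^{\alpha,2}([0,T],H^{-1}(\mathbb{T}^N)))$. The ball $K_R:=\{v:\|v\|_{L^2([0,T],W^{1,2})}+\|v\|_{W^{\alpha,2}([0,T],H^{-1})}\leq R\}$ is then relatively compact in $L^2([0,T],L^2(\mathbb{T}^N))$ by the first part of Theorem \ref{embeddings} with $B_0=W^{1,2}$, $B=L^2$, $B_1=H^{-1}$, the compact embedding $W^{1,2}\hookrightarrow\hookrightarrow L^2$ on the torus being the classical Rellich result; Markov's inequality then provides the uniform tail control $\mathbb{P}(u^\delta\notin K_R)\leq CR^{-2}$.

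For the factor $C([0,T],H^{-(1+\epsilon)}(\mathbb{T}^N))$ I would apply the second part of Theorem \ref{embeddings} with $B_0=H^{-1}$, $B=H^{-(1+\epsilon)}$, the required compact embedding $H^{-1}\hookrightarrow\hookrightarrow H^{-(1+\epsilon)}$ on the torus following again from Rellich by duality. This however requires a uniform $L^p(\Omega;W^{\alpha,p}([0,T],H^{-1}))$-bound with $\alpha p>1$, forcing $p>2$. I would therefore first upgrade Lemma \ref{a_priori_1} to higher moments by applying Itô's formula to $\|u^\delta_t\|^{2k}_{L^2}$: the resulting identity produces noise terms of the form $\mu^2\|u^\delta\|^{2k-2}_{L^2}\sum_i\|(-\Delta+1)^{\delta/2}B_i(u^\delta)\|^2$ which can be absorbed by the dissipation $\|u^\delta\|^{2k-2}_{L^2}\|u^\delta\|^2_{W^{1,2}}$ under a $k$-dependent smallness condition $\mu<\mu_k$, yielding $\mathbb{E}\sup_{t\leq T}\|u^\delta_t\|^{2k}_{L^2}<\infty$ uniformly in $\delta$. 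The $L^p$-version of Lemma \ref{flandoli_frac2} (via Lemma \ref{flandoli_frac} applied at moment $p=2k$), together with the trivial membership $\int_0^{\cdot}(-\Delta+1)u^\delta\,ds\in L^p(\Omega;W^{1,p}([0,T],H^{-1}))$, then delivers the required $L^p(\Omega;W^{\alpha,p}([0,T],H^{-1}))$-bound, and a second Markov estimate produces the desired compact set in $C([0,T],H^{-(1+\epsilon)})$.

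The main obstacle is precisely the higher-moment energy identity: one must verify that the noise contribution is strictly dominated by the drift dissipation at every power $k$, including the Itô-correction term arising from the second derivative of $\xi\mapsto\|\xi\|^{2k}$, at the cost of a progressively more stringent smallness constant $\mu_k$. Once this bookkeeping is carried out, intersecting the two relatively compact sets obtained in the two steps above yields a common compact set in $L^2([0,T]\times\mathbb{T}^N)\cap C([0,T],H^{-(1+\epsilon)})$ of probability at least $1-\eta$ uniformly in $\delta$, concluding the tightness argument.
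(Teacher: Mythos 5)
For the factor $L^2([0,T]\times\mathbb{T}^N)$ your argument coincides with the paper's: the first part of Theorem \ref{embeddings} with $B_0=W^{1,2}$, $B=L^2$, $B_1=H^{-1}$, the uniform bounds of Lemmas \ref{a_priori_1} and \ref{a_priori_2}, and Markov's inequality. Where you diverge is the factor $C([0,T],H^{-(1+\epsilon)}(\mathbb{T}^N))$: the paper disposes of it in one sentence by invoking the second embedding of Theorem \ref{embeddings} "similarly", whereas you correctly observe that this embedding requires $\alpha p>1$, which is incompatible with $p=2$ and the constraint $\alpha<1/2$ coming from Lemma \ref{flandoli_frac}. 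Your remedy --- upgrading the energy estimate to $\mathbb{E}\sup_t\|u^\delta_t\|_{L^2}^{2k}$ via It\^o's formula for $\|\cdot\|^{2k}$ so as to run the fractional-Sobolev bound at an exponent $p=2k>2$ with $\alpha\in(1/p,1/2)$ --- is the standard Flandoli--Gat\c{a}rek fix and is sound in outline; note that a single fixed $k$ with $2k\alpha>1$ (e.g.\ $k=2$, $\alpha\in(1/4,1/2)$) suffices, so you do not actually need a "progressively more stringent" sequence of constants, only one further smallness threshold on $\mu$ beyond the $\mu_0$ of Lemma \ref{a_priori_1}. What your route buys is an honest verification of the hypothesis $\alpha p>1$ that the paper leaves implicit; what it costs is the extra smallness condition on $\mu$ and a heavier moment bookkeeping.

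Two loose ends in your execution deserve attention. First, the membership $\int_0^{\cdot}(-\Delta+1)u^\delta_s\,ds\in L^p(\Omega;W^{1,p}([0,T],H^{-1}))$ is not "trivial": it requires $\mathbb{E}\bigl[\bigl(\int_0^T\|u^\delta_s\|_{H^1}^{p'}ds\bigr)^{p/p'}\bigr]<\infty$ for a suitable $p'$, and Lemma \ref{a_priori_1} only provides second moments of the $L^2_tH^1_x$ norm; you either need to extract the corresponding higher moment from the $k=1$ It\^o identity (squaring it and using the BDG inequality on the martingale term), or bypass the issue by sending the drift contribution into $W^{\alpha,p}([0,T],H^{-1})$ through the scalar Sobolev embedding $W^{1,2}(0,T)\hookrightarrow W^{\alpha,p}(0,T)$, valid here since $\alpha-1/p\leq 1/2$. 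Second, in the It\^o computation for $\|u\|_{L^2}^{2k}$ the quadratic-variation correction is controlled by $\langle u,(-\Delta+1)^{\delta/2}B_i(u)\rangle^2\leq\|u\|_{L^2}^2\|B_i(u)\|_{H^1}^2$, so it is indeed absorbed by the dissipation $\|u\|_{L^2}^{2k-2}\|u\|_{H^1}^2$ exactly as you claim, but this is the step that must be written out to justify the $k$-dependent threshold. With these points settled your proof is complete and, for the second factor, more rigorous than the paper's own.
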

        \begin{proof}
        Note that with $B_0=W^{1,p}(\mathbb{T}^N)$, $B=L^2(\mathbb{T}^N)$ and $B_1=H^{-1}(\mathbb{T}^N)$, we can apply the previous Theorem \ref{embeddings} to conclude that we have the compact embedding
        \[
        Y:=L^2([0,T], W^{1,2}(\mathbb{T}^N))\cap W^{\alpha, 2}([0,T], H^{-1}(\mathbb{T}^N))\hookrightarrow\hookrightarrow L^2([0,T]\times \mathbb{T}^N).
        \]
        We conclude that for any $R>0$, we set
        \[
        B_R:=\{ u\in  Y\ |\ \norm{u}_{  L^2([0,T], W^{1,2}(\mathbb{T}^N))}+\norm{u}_{W^{\alpha, 2}([0,T], H^{-1}(\mathbb{T}^N))}\leq R\}
        \]
        is compact in $L^2([0,T]\times \mathbb{T}^N)$. Hence by Markov's inequality and the a priori bounds established in Lemmas \ref{a_priori_1} and \ref{a_priori_2} we obtain
        \begin{equation*}
            \begin{split}
     \mathbb{P}(u^\delta\in B_R^c)&\leq \mathbb{P}(\norm{u^\delta}_{ L^2([0,T], W^{1,2}(\mathbb{T}^N))}>R/2)\\
     &+\mathbb{P}(\norm{u^\delta}_{W^{\alpha, 2}([0,T], H^{-1}(\mathbb{T}^N))}>R/2)\\
     &\leq\frac{4}{R^2} \mathbb{E}\left[\norm{u^\delta}_{ L^2([0,T], W^{1,2}(\mathbb{T}^N))}^2+\norm{u^\delta}_{W^{\alpha, 2}([0,T], H^{-1}(\mathbb{T}^N))}^2\right]\\
     &\leq \frac{4C}{R^2}
            \end{split}
        \end{equation*}
        from which we conclude that $(u^\delta)_{\delta\in [0,1)}$ is tight in $L^2([0,T]\times \mathbb{T}^N)$. Moreover, notice that due to the second compact embedding stated in the previous Theorem \ref{embeddings}, we also obtain similarly tightness in $C([0,T], H^{-(1+\epsilon)}(\mathbb{T}^N))$ for any $\epsilon>0$.
        \end{proof}
        \subsection{Prokhorov, Skorokhod and Martingale representation theorem}
        Consider the sequence $(u^\delta)_{\delta\in [0,1)\cap \mathbb{Q}}$. By Prokhorov's theorem, we can find a subsequence we shall also note $(u^\delta)_{\delta\in [0,1)\cap \mathbb{Q}}$ whose laws converge weakly in $L^2([0,T]\times \mathbb{T}^N)\cap C([0,T], H^{-(1+\epsilon)}(\mathbb{T}^N))$. By Skorokhod's representation theorem, we then find a stochastic basis $(\tilde{\Omega}, \tilde{\mathcal{F}}, (\tilde{\mathcal{F}}_t)_t, \tilde{\mathbb{P}}$) as well as random variables $\tilde{u}, (\tilde{u}^\delta)_\delta$ on this stochastic basis taking values in $L^2([0,T]\times \mathbb{T}^N)\cap C([0,T], H^{-(1+\epsilon)}(\mathbb{T}^N))$ such that $\tilde{\mathbb{P}}$-almost surely
        \[
        \tilde{u}_\delta\to \tilde{u} \qquad  \text{in}\  L^2([0,T]\times \mathbb{T}^N)\cap C([0,T], H^{-(1+\epsilon)}(\mathbb{T}^N))
        \]
        as $\delta \to 1$. Note moreover that 
        \[
        \tilde{\mathbb{E}}\left[\int_0^T\norm{\tilde{u_s}^\delta}_{W^{1,2}(\mathbb{T}^N)}^2\right]=\mathbb{E}\left[\int_0^T\norm{u^\delta_s}_{W^{1,2}(\mathbb{T}^N)}^2\right]<\infty
        \]
        and thus in particular $\tilde{u}\in L^2(\Omega\times [0,T], W^{1,2}(\mathbb{T}^N))$ and $\tilde{u}^\delta\rightharpoonup \tilde{u}$ in $L^2([0,T], W^{1,2}(\mathbb{T}^N))$, $\tilde{\mathbb{P}}$-almost surely and also in $L^2(\Omega)$ due to Vitali's convergence theorem. In particular, by definition for any $\varphi\in H^{1}(\mathbb{T}^N)$ and $t\leq T$ we have
        \begin{equation}
       \int_0^t \langle (-\Delta+1)\tilde{u}^\delta_s, \varphi\rangle_{L^2(\mathbb{T}^N)} ds\to \int_0^t \langle (-\Delta+1)\tilde{u}_s, \varphi\rangle_{L^2(\mathbb{T}^N)} ds
       \label{conv_in_L2}
        \end{equation}
      in $L^2(\Omega)$. Let us now define the sequence of $L^2(\mathbb{T}^N)$ valued $(\mathcal{F}_t)_t$-martingales $(M^\delta)_{\delta\in [0,1)\cap \mathbb{Q}}$ via
      \begin{equation*}
          \begin{split}
         M^\delta_t:&=(-\Delta+1)^{-(1+\epsilon)/2}\left( u^\delta_t-u_0-\int_0^t (\Delta-1)u^\delta_sds\right)\\
         &=\int_0^t\mu(-\Delta+1)^{(\delta-1-\epsilon)/2}B(u^\delta_s)dW_s.
          \end{split}
      \end{equation*}
     Since $\mathcal{L}(u^\delta)=\mathcal{L}(\tilde{u}^\delta)$, it is easy to see that 
     \[
      \tilde{M}^\delta_t:=(-\Delta+1)^{-(1+\epsilon)/2}\left( \tilde{u}^\delta_t-u_0-\int_0^t (\Delta-1)\tilde{u}^\delta_sds\right)
      \]
      is a martingale with respect to the filtration $(\mathcal{G}_t)_t$, where $\mathcal{G}_t:=\sigma(\{\tilde{u}^\delta_s, s\leq t\})$, whose quadratic variation is given by 
      \[
      \langle \tilde{M}^\delta\rangle_t=\mu^2\int_0^t  \left((-\Delta+1)^{(\delta-1-\epsilon)/2}B(\tilde{u}^\delta_s)\right)\left((-\Delta+1)^{(\delta-1-\epsilon)/2}B(\tilde{u}^\delta_s)\right)^*ds
      \]
      Defining finally the process 
      \[
      \tilde{M}_t:= (-\Delta+1)^{-(1+\epsilon)/2}\left( \tilde{u}_t-u_0-\int_0^t (\Delta-1)\tilde{u}_sds\right),
      \]
      we need to show that it is a martingale with quadratic variation given by
      \[
      \langle \tilde{M}\rangle_t=\mu^2\int_0^t \ \left((-\Delta+1)^{-(1+\epsilon)/2}B(\tilde{u}_s)\right)\left((-\Delta+1)^{-(1+\epsilon)/2}B(\tilde{u}_s)\right)^*ds.
      \]
      Indeed, we know that for any bounded continuous functional $\phi$ on $L^2([0,T]\times \mathbb{T}^N)\cap C([0,T], H^{-(1+\epsilon)})$ and $v, z\in H^{1+\epsilon}(\mathbb{T}^N)$ we have
      \[
      \mathbb{E}\left[\langle \tilde{M}^\delta_t-\tilde{M}^\delta_s, v\rangle \phi(\tilde{u}^\delta|_{[0,s]})\right]=0
      \]
    and due to \eqref{conv_in_L2}, we can pass to the limit concluding that 
    \[
     \mathbb{E}\left[\langle \tilde{M}_t-\tilde{M}_s, v\rangle \phi(\tilde{u}|_{[0,s]})\right]=0
      \]
     i.e. $(\tilde{M}_t)_t$ is a martingale with respect to the filtration generated by the process $(\tilde{u}_t)_t$. Moreover, concerning its quadratic variation, note that 
      \begin{equation*}
          \begin{split}
                 &\mathbb{E}\left[ \left(\langle \tilde{M}^\delta_t,v\rangle\langle \tilde{M}^\delta_t,z\rangle-\langle \tilde{M}^\delta_s,v\rangle\langle \tilde{M}^\delta_s,z\rangle\right)\phi(\tilde{u}^\delta|_{[0,s]}) \right]\\
                 =&\mathbb{E}\Big[\left(\int_s^t  \sum_{i=1}^d\langle (-\Delta+1)^{(\delta-1-\epsilon)/2} B_i(\tilde{u}^\delta_r), v\rangle \langle (-\Delta+1)^{(\delta-1-\epsilon)/2} B_i(\tilde{u}^\delta_r), z\rangle dr\right)\\
                 &\phi(\tilde{u}^\delta|_{[0,s]})\Big]\mu^2.
          \end{split}
      \end{equation*}
Since the family of operators 
   \[
   u\to (-\Delta+1)^{(\delta-1-\epsilon)/2} B_i(u)
   \]
for $\delta\in [0,1)$ is uniformly continuous on $L^2(\mathbb{T}^N)$ and $\tilde{u}^\delta\to \tilde{u}$  in $L^2([0,T]\times \mathbb{T}^N)$ $\tilde{\mathbb{P}}$-almost surely and in $L^2(\Omega) $ due to Vitali's convergence theorem, we can again pass to the limit $\delta\nearrow 1$ obtaining 
\begin{equation*}
    \begin{split}
         &\mathbb{E}\left[ \left(\langle \tilde{M}_t,v\rangle\langle \tilde{M}_t,z\rangle-\langle \tilde{M}_s,v\rangle\langle \tilde{M}_s,z\rangle\right)\phi(\tilde{u}|_{[0,s]}) \right]\\
                 &=\mathbb{E}[\mu^2\left(\int_s^t  \sum_{i=1}^d\langle (-\Delta+1)^{-\epsilon/2} B_i(\tilde{u}_r), v\rangle \langle (-\Delta+1)^{-\epsilon/2} B_i(\tilde{u}_r), z\rangle dr\right)\phi(\tilde{u}|_{[0,s]})]
    \end{split}
\end{equation*}
      meaning that indeed 
      \[
      \langle \tilde{M}\rangle_t=\int_0^t \ \left((-\Delta+1)^{-(1+\epsilon)/2}B(\tilde{u}_s)\right)\left((-\Delta+1)^{-(1+\epsilon)/2}B(\tilde{u}_s)\right)^*ds.
      \]
      
            Similar to section 8.4 in \cite{daprato_zabczyk}, we can then use the martingale representation theorem 8.2 in \cite{daprato_zabczyk} to conclude the existence of a filtered probability space $(\Omega', \mathcal{F}', (\mathcal{F}'_t)_t, \mathbb{P}')$, a $d$-dimensional $(\mathcal{F}'_t)_t$-Brownian motion $W'$ and a predictable process $(u'_t)_t$ taking values in $L^2([0,T]\times \mathbb{T}^N)\cap C([0,T], H^{-(1+\epsilon)}(\mathbb{T}^N))$ such that 
      \begin{equation*}
          \begin{split}
               (-\Delta+1)^{-(1+\epsilon)/2}u'_t&=(-\Delta+1)^{-(1+\epsilon)/2}u_0\\
               &+(-\Delta+1)^{-(1+\epsilon)/2}\int_0^t (\Delta-1)u'_sds\\
               &+\mu(-\Delta+1)^{-(1+\epsilon)/2}\int_0^t(-\Delta+1)^{1/2} B(u'_s)dW'_s
          \end{split}
      \end{equation*}
     or in other words for any $\varphi\in H^{1+\epsilon}(\mathbb{T}^N)$, we have
     \begin{equation*}
         \begin{split}
              \langle u'_t, \varphi\rangle=\langle u_0, \varphi\rangle +\left\langle \int_0^t (\Delta-1)u'_sds, \varphi\right\rangle
               +\left\langle \int_0^t\mu(-\Delta+1)^{1/2} B(u'_s)dW'_s, \varphi\right\rangle
         \end{split}
     \end{equation*}
     meaning we have indeed a martingale solution.
      
       \begin{example}
       As an example one can consider the problem
       \begin{align}
           \begin{cases}
           du_t&= (\Delta-1) u_tdt+\mu \sum_{i=1}^d \mbox{div} (B_i(u_t))d\beta^i_t\\
           u(0)&=u_0
           \end{cases}
           \label{example}
       \end{align}
      for $B_1, \ldots,
      B_d\in C^1(\R, \R^N)$ with bounded derivative which can be seen as
        \begin{align*}
           \begin{cases}
           du_t&= (\Delta-1) u_tdt+\mu (-\Delta+1)^{1/2}\sum_{i=1}^d (-\Delta+1)^{-1/2}\mbox{div}(B_i(u_t))d\beta^i_t\\
           u(0)&=u_0.
           \end{cases}
       \end{align*}
       Having in mind the abstract results in the proof of Theorem \ref{main_spde_result}, one has to consider the Nemytskii operators
       \begin{align*}
           B^0: L^p(\mathbb{T}^N)&\rightarrow \gamma(U, L^p(\mathbb{T}^N))\\
           z &\rightarrow \left( u\rightarrow \sum_{i=1}^d (-\Delta+1)^{-1/2}\mbox{div} (B_i(z))\langle u, e_i\rangle\right) 
       \end{align*}
       and
       \begin{align*}
           B^1: W^{1,p}(\mathbb{T}^N)&\rightarrow \gamma(U, W^{1,p}(\mathbb{T}^N))\\
           z &\rightarrow \left( u\rightarrow \sum_{i=1}^d (-\Delta+1)^{-1/2}\mbox{div} (B_i(z))\langle u, e_i\rangle\right).
       \end{align*}
       After showing that $B^0$  is Lipschitz and $B^1$ admits a growth condition in the above setting, one can for each $\delta<1$ construct a unique mild solution $u^\delta\in Z_{q, W^{1, p}(\mathbb{T}^N)}$ to the regularized problem
       \begin{align*}
           \begin{cases}
           du^\delta_t&= (\Delta-1) u^\delta_t dt+\mu (-\Delta+1)^{\delta/2}\sum_{i=1}^d (-\Delta+1)^{-1/2}\mbox{div}B_i(u^\delta_t)dW^i_t\\
           u^\delta(0)&=u_0.
           \end{cases}
       \end{align*}
       Provided $\mu$ is sufficiently small, one can exploit the reasoning of this section to deduce the existence of a martingale solution to the problem \eqref{example}. 
       \end{example}
        \begin{remark}
     Existence of weak solutions in the  linear setting of this example i.e. $(B_i)_{i\leq d}$ linear is classical, see Example 7.22 in \cite{daprato_zabczyk} or section three in \cite{krylov_stochastic_1981}. In this light, the present subsection provides an alternative approximation scheme to Yosida approximations used in \cite{daprato_zabczyk} and Galerkin approximations employed in \cite{krylov_stochastic_1981}, capable of treating non-linear diffusion coefficients in this setting.
       \end{remark}

\section*{Appendix}
\begin{lemma}
Let $G\in C^m(\mathbb{T}^N\times \R)$ and $F\in C^m(\R)$ have bounded derivative and such that $F(0)=0$. Then for all $p\geq 1$ and $h\in W^{m,p}(\mathbb{T}^N)\cap W^{1, mp}(\mathbb{T}^N)$ one has
\[
||G(\cdot, h(\cdot))||_{W^{m,p}(\mathbb{T}^N)}\leq C(1+||h||^m_{W^{1,mp}(\mathbb{T}^N)}+||h||_{W^{m,p}(\mathbb{T}^N)})
\]
and 
\[
||F(h)||_{W^{m,p}(\mathbb{T}^N)}\leq C(||h||^m_{W^{1,mp}(\mathbb{T}^N)}+||h||_{W^{m,p}(\mathbb{T}^N)}).
\]
(refer to \cite{Hofmanova2013} and the references therein).
\label{Nemytskii_simple}
\end{lemma}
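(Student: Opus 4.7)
The strategy is to reduce the estimate to a term-by-term analysis via Faà di Bruno's formula, followed by Hölder's inequality and a Gagliardo-Nirenberg interpolation. For any multi-index $\alpha$ with $|\alpha|=k\leq m$, the generalised chain rule yields
\[
\partial^\alpha F(h) \;=\; \sum_{j=1}^{k}\;\sum_{\beta_1+\cdots+\beta_j=\alpha,\ |\beta_i|\geq 1} c_{\beta}\, F^{(j)}(h)\prod_{i=1}^{j}\partial^{\beta_i}h,
\]
and since $F$ has bounded derivatives up to order $m$, the scalar factor $F^{(j)}(h)$ can be absorbed into a constant uniformly in $h$. This reduces matters to estimating $\bigl\|\prod_{i=1}^{j}\partial^{\beta_i}h\bigr\|_{L^p(\mathbb{T}^N)}$ under the constraint $\sum_i|\beta_i|=k\leq m$, $|\beta_i|\geq 1$.

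First I would apply Hölder's inequality with exponents $q_i=mp/|\beta_i|$, which satisfy $\sum_i 1/q_i = k/(mp)\leq 1/p$, giving
\[
\Bigl\|\prod_{i=1}^{j}\partial^{\beta_i}h\Bigr\|_{L^p} \;\leq\; C\prod_{i=1}^{j}\|\partial^{\beta_i}h\|_{L^{mp/|\beta_i|}}.
\]
Then for each factor with $1\leq|\beta_i|\leq m$, I would invoke the Gagliardo-Nirenberg interpolation on the torus,
\[
\|\nabla^{|\beta_i|}h\|_{L^{mp/|\beta_i|}} \;\leq\; C\,\|\nabla h\|_{L^{mp}}^{(m-|\beta_i|)/(m-1)}\|\nabla^{m}h\|_{L^{p}}^{(|\beta_i|-1)/(m-1)},
\]
whose scaling is checked directly from the relations $\tfrac{m-|\beta_i|}{m-1}+\tfrac{|\beta_i|-1}{m-1}=1$ and $\tfrac{m-|\beta_i|}{m-1}\cdot 1+\tfrac{|\beta_i|-1}{m-1}\cdot m=|\beta_i|$. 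Young's inequality in the geometric-mean form turns each factor into $C(\|h\|_{W^{1,mp}}+\|h\|_{W^{m,p}})$, and a second application to the product bounds the whole partition term by $C(1+\|h\|_{W^{1,mp}}^{j}+\|h\|_{W^{m,p}}^{j})$ with $j\leq m$. To close the $F$-estimate I would handle the $|\alpha|=0$ term by the elementary inequality $|F(h)|\leq \|F'\|_\infty|h|$ afforded by $F(0)=0$, giving $\|F(h)\|_{L^p}\leq C\|h\|_{L^p}\leq C\|h\|_{W^{m,p}}$.

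For $G(x,h(x))$ the Faà di Bruno expansion now also contains mixed $x$-derivatives $(\partial_x^{\beta_0}\partial_\xi^{j}G)(x,h(x))$ with $\beta_0+\sum_{i\geq 1}\beta_i=\alpha$; all such factors are uniformly bounded on $\mathbb{T}^N\times\mathbb{R}$ by the hypotheses on $G$, and the pure $x$-derivative piece ($j=0$) contributes only a uniformly bounded function to the $L^p$ norm, which is exactly the source of the additive $+1$ in the statement. The remaining $j\geq 1$ terms are handled by the same Hölder/Gagliardo-Nirenberg argument as above.

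The main obstacle is the combinatorial bookkeeping needed to verify that \emph{every} term in the Faà di Bruno expansion closes up into the two target norms with the claimed homogeneous degrees: one has to check that the Hölder exponents $mp/|\beta_i|$ and the Gagliardo-Nirenberg weights $(|\beta_i|-1)/(m-1)$ are mutually compatible so that Young's inequality indeed absorbs each mixed product into $\|h\|_{W^{1,mp}}^{m}+\|h\|_{W^{m,p}}$. This is precisely the computation carried out in the proofs of Propositions 4.2 and 4.3 in \cite{Hofmanova2013}, which the lemma essentially unifies in a single statement.
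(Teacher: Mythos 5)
Your route---Fa\`a di Bruno, H\"older with exponents $mp/|\beta_i|$, Gagliardo--Nirenberg interpolation between $\|\nabla h\|_{L^{mp}}$ and $\|\nabla^m h\|_{L^p}$, then Young---is exactly the standard Moser-type argument behind Propositions 4.2 and 4.3 of Hofmanov\'a, which is all the paper itself offers for this lemma (it gives no proof, only the citation). Your scaling check for the interpolation and the exponent count $\sum_i 1/q_i=k/(mp)\le 1/p$ are both correct.

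There is, however, one concrete error in your final assembly which, as written, does not yield the stated bound: you claim each partition term is controlled by $C(1+\|h\|^{j}_{W^{1,mp}}+\|h\|^{j}_{W^{m,p}})$ with $j\le m$, whereas the lemma requires the $W^{m,p}$-norm to enter only to the \emph{first} power. This is not cosmetic: the linear dependence on $\|h\|_{W^{m,p}}$ is precisely what makes the growth conditions of Lemma \ref{abstract_statement_2} closable when iterating in $Z_{q,W^{m,p}(\mathbb{T}^N)}$; a bound of degree $m$ in that norm would break the uniform control of the Picard iterates. The correct bookkeeping does deliver first power: writing $A=\|\nabla h\|_{L^{mp}}$ and $B=\|\nabla^m h\|_{L^p}$, your interpolation gives for a partition $\beta_1+\dots+\beta_j$ of total order $k$ the product $A^{\sigma}B^{\tau}$ with $\sigma=\sum_i\tfrac{m-|\beta_i|}{m-1}=\tfrac{jm-k}{m-1}$ and $\tau=\sum_i\tfrac{|\beta_i|-1}{m-1}=\tfrac{k-j}{m-1}$; since $j\ge 1$ and $k\le m$ one has $\tau\le 1$ and $\tfrac{\sigma}{m}+\tau=\tfrac{k}{m}\le 1$, so Young's inequality gives $A^{\sigma}B^{\tau}\le C(1+A^m+B)$ with $B$ appearing linearly. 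You should replace your stated intermediate bound by this computation. Two smaller points: on $\mathbb{T}^N$ the Gagliardo--Nirenberg inequality carries an additive lower-order term (or requires a mean-zero normalization), which must be tracked; and for the $F$-estimate, which is claimed without the additive constant $1$, the terms with $k<m$ (where $\tfrac{\sigma}{m}+\tau<1$ and Young produces a constant) need the extra observation that every partition block carries at least one derivative of $h$, together with $F(0)=0$ for the zeroth-order term---this is part of the bookkeeping you correctly defer to the cited reference.
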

\subsection*{On stochastic integrals in different Banach spaces}
Note that the stochastic integral in a Banach space $X$ is defined as a certain limit in the Bochner space $L^2(\Omega; X)$, i.e. it depends in particular on the topology induced by the norm on $X$. To underline this fact, introduce for $\psi\in L^2(\Omega\times[0,T]; \gamma(U, X))$ the notation
\[
(X\int_0^T)\psi(t)dW_t\in L^2(\Omega; X)
\]
Heuristically speaking, one should expect the stochastic integral not to change if one looks at it in a "larger" Banach space. A bit more formally, for $X\hookrightarrow Y$, one would expect
\[
(X\int_0^T)\psi(t)dW_t=(Y\int_0^T)\psi(t)dW_t.
\]
The following Lemma formalizes this consideration. 

\begin{lemma}(Banach space consistency of stochastic integration)
Let $X,Y$ be 2-smooth Banach spaces such that $X\hookrightarrow Y$, where the embedding operator is the identity operator. Suppose that $\psi\in L^2(\Omega\times [0,T]; \gamma(U,X))$. Then one has $\psi\in L^2(\Omega\times [0,T]; \gamma(U,Y))$ and thus the stochastic integral
\[
 (Y\int_0^T)\psi(t)dW_t\in L^2(\Omega; Y)
\]
is well defined. Moreover this $Y$-stochastic integral also lies in  $L^2(\Omega; X)$ and one has
\[
(X\int_0^T)\psi(t)dW_t= (Y\int_0^T)\psi(t)dW_t \qquad \mbox{in } L^2(\Omega; X)
\]
\label{Banach_space_consistency}
\end{lemma}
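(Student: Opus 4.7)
My plan is to exploit the universal construction of the stochastic integral in 2-smooth Banach spaces via approximation by adapted simple processes, and then leverage the fact that on simple processes the two integrals coincide tautologically.

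First, I would verify the preparatory claim that $\psi\in L^2(\Omega\times[0,T];\gamma(U,Y))$. Denoting by $i:X\hookrightarrow Y$ the continuous embedding, the ideal property of $\gamma$-radonifying operators yields $\|i\circ\phi\|_{\gamma(U,Y)}\leq \|i\|_{L(X,Y)}\|\phi\|_{\gamma(U,X)}$ for every $\phi\in\gamma(U,X)$, so integrating in $\Omega\times[0,T]$ gives the required integrability. In particular the $Y$-valued stochastic integral $(Y\!\int_0^T)\psi(t)dW_t$ is well-defined as an element of $L^2(\Omega;Y)$.

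Next, I would approximate $\psi$ by a sequence of adapted elementary processes $(\psi_n)_n$ of the form $\psi_n(t,\omega)=\sum_{k}\mathbb{1}_{(t_k,t_{k+1}]}(t)\xi_k(\omega)$ with $\xi_k$ an $\mathcal{F}_{t_k}$-measurable finite-rank operator in $\gamma(U,X)$, such that $\psi_n\to\psi$ in $L^2(\Omega\times[0,T];\gamma(U,X))$. By the ideal-property bound from the first step, this convergence is inherited in $L^2(\Omega\times[0,T];\gamma(U,Y))$ as well. For each such elementary process the stochastic integral is just a finite sum of the form $\sum_k\xi_k(W_{t_{k+1}}-W_{t_k})$, which is an $X$-valued random variable regardless of whether it is constructed inside $X$ or inside $Y$. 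Hence
\[
(X\!\int_0^T)\psi_n(t)dW_t=(Y\!\int_0^T)\psi_n(t)dW_t\quad\text{in }L^2(\Omega;X)\hookrightarrow L^2(\Omega;Y).
\]

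Finally, I would pass to the limit using Itô's isomorphism in both spaces. On the $X$-side, $(X\!\int_0^T)\psi_n dW\to(X\!\int_0^T)\psi dW$ in $L^2(\Omega;X)$, which by continuity of the embedding yields convergence in $L^2(\Omega;Y)$. On the $Y$-side, $(Y\!\int_0^T)\psi_n dW\to(Y\!\int_0^T)\psi dW$ in $L^2(\Omega;Y)$. Since the two sequences agree for every $n$, uniqueness of limits in $L^2(\Omega;Y)$ forces equality of the two limits as elements of $L^2(\Omega;Y)$. But the $X$-integral lies a fortiori in $L^2(\Omega;X)$, so the $Y$-integral admits a version with values in $X$ coinciding with the $X$-integral, and the asserted identity holds in $L^2(\Omega;X)$. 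The only mildly delicate point in this plan is the density of adapted elementary processes in $L^2(\Omega\times[0,T];\gamma(U,X))$, which is standard in the 2-smooth setting (see \cite{ondrejat,Brzeniak1995}) but should be invoked explicitly; everything else is a clean limiting argument built on the ideal property and Itô's isomorphism.
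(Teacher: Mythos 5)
Your proposal is correct and follows essentially the same route as the paper's proof: embed $\gamma(U,X)$ into $\gamma(U,Y)$ via the ideal property, observe that the integrals of approximating elementary processes coincide tautologically, and identify the two limits by uniqueness in $L^2(\Omega;Y)$. The only cosmetic difference is that you flag the density of adapted elementary processes explicitly, which the paper takes for granted.
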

\begin{proof}
Note that  by the continuous embedding $X\hookrightarrow Y$we have $\gamma(U,X)\hookrightarrow \gamma(U,Y)$ and therefore also $L^2(\Omega\times [0,T]; \gamma(U,X))\hookrightarrow L^2(\Omega\times [0,T]; \gamma(U,Y))$, i.e. in particular $\psi\in  L^2(\Omega\times [0,T]; \gamma(U,Y))$.\newline\indent
For the second part of the Lemma, note that stochastic integrals are defined as $L^2$ limits of stochastic integrals over approximating elementary processes. On the level of elementary processes, the norms of $X$ and $Y$ do not come into play and the canonical definitions of stochastic integrals with respect to elementary processes coincide therefore in $L^2(\Omega, X)$ and $L^2(\Omega, Y)$ as the spaces of reference in which the stochastic integrals live. \newline\indent
Let $(\psi^n)_n$ be a sequence of elementary processes approximating $\psi\in L^2(\Omega\times [0,T]; \gamma(U; X))$, i.e. one has
 \[
 \mathbb{E}\left[\int_0^T||\psi(t)-\psi^n(t)||_{\gamma(U, X)}^2ds\right]\rightarrow 0
 \]
 Due to the demonstrated embedding result one also has
 $(\psi^n)_n\subset L^2(\Omega\times [0,T]; \gamma(U; Y))$ and moreover, for the same reason
 \begin{align*}
      \mathbb{E}\left[\int_0^T||\psi(t)-\psi^n(t)||^2_{\gamma(U,Y)}dt\right]&\leq C \mathbb{E}\left[\int_0^T||\psi(t)-\psi^n(t)||^2_{\gamma(U,X)}dt\right]\\
      &\rightarrow 0.
 \end{align*}
 Hence, any sequence of elementary processes $(\psi^n)_n$ approximating $\psi$ in the space $L^2(\Omega\times [0,T]; \gamma(U; X))$ also approximates $\psi$ in $L^2(\Omega\times [0,T]; \gamma(U; Y))$. 
 Moreover, by Itô's inequality applied in the Banach space $Y$ one has 
 \begin{align*}
    \mathbb{E}\left[||\int_0^T\psi^n(t)dW_s-\int_0^T\psi^m(t)dW_s||_Y^2\right]&\leq \mathbb{E}\left[\int_0^T ||\psi^n(t)-\psi^m(t)||_{\gamma(U,Y)}^2ds\right]\\
    &\rightarrow 0 
 \end{align*}
 The sequence 
 \[
 \left(\int_0^t\psi^n(t)dW_s\right)_{n\geq 1}
 \]
 is therefore Cauchy in $L^2(\Omega, Y)$.
 By definition, the stochastic integral $I_Y:=(Y\int_0^T)\psi(t)dW_t$ is the $L^2(\Omega; Y)$ limit of the above sequence. Note however that
 \begin{align*}
 &\mathbb{E}\left[||(X\int_0^T)\psi(t)dW_s-\int_0^T\psi^n(t)dW_s||_Y^2\right]\\
 \leq& \mathbb{E}\left[||(X\int_0^T)\psi(t)dW_s-\int_0^T\psi^n(t)dW_s||_X^2\right]\rightarrow 0
 \end{align*}
 since $(\psi^n)_n$ is a sequence of elementary processes used to define the stochastic integral in the Banach space $X$. By the uniqueness of limits, one concludes therefore
 \[
 (X\int_0^T)\psi(t)dW_s=(Y\int_0^T)\psi(t)dW_s.
 \]
 \end{proof}
\section*{Acknowledgment}
The author wishes to thank Martina Hofmanová, Arnaud Debussche, Máté Gerencsér, Tommaso Rosati and his PhD advisor Lorenzo Zambotti for valuable discussions as well as the anonymous referee for her/his detailed report pointing out some mistakes in an earlier version. This project has received funding from the European Union's Horizon 2020 research and innovation programme under the Marie Skłodowska-Curie grant agreement No 754362.

	\begin{figure}[ht]
	    \centering
	  \includegraphics[height=10mm]{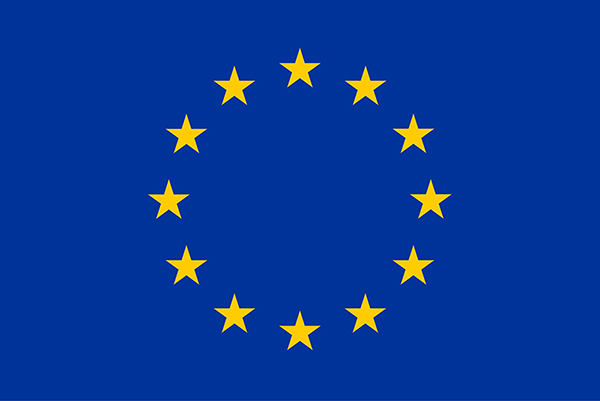}
	\end{figure}

     \bibliographystyle{unsrt}
\bibliography{main}

\begin{thebibliography}{10}

\bibitem{krylov_stochastic_1981}
N.~V. Krylov and B.~L. Rozovskii.
\newblock Stochastic evolution equations.
\newblock {\em Journal of Soviet Mathematics}, 16(4):1233--1277, 1981.

\bibitem{Liu2015}
W.~Liu and M.~R\"{o}ckner.
\newblock {\em Stochastic Partial Differential Equations: An Introduction}.
\newblock Springer International Publishing, 2015.

\bibitem{GYONGY1998271}
I.~Gyöngy.
\newblock Existence and uniqueness results for semilinear stochastic partial
  differential equations.
\newblock {\em Stochastic Processes and their Applications}, 73(2):271 -- 299,
  1998.

\bibitem{Hofmanova2013}
M.~Hofmanov{\'a}.
\newblock Strong solutions of semilinear stochastic partial differential
  equations.
\newblock {\em Nonlinear Differential Equations and Applications NoDEA},
  20(3):757--778, 2013.

\bibitem{Pazy1983}
A.~Pazy.
\newblock {\em Semigroups of Linear Operators and Applications to Partial
  Differential Equations}.
\newblock Springer New York, 1983.

\bibitem{factorization}
G.~Da Prato, S.~Kwapieň, and J.~Zabczyk.
\newblock Regularity of solutions of linear stochastic equations in hilbert
  spaces.
\newblock {\em Stochastics}, 23(1):1--23, 1988.

\bibitem{daprato_zabczyk}
G.~Da~Prato and J.~Zabczyk.
\newblock {\em Stochastic Equations in Infinite Dimensions}.
\newblock Encyclopedia of Mathematics and its Applications. Cambridge
  University Press, 2 edition, 2014.

\bibitem{ondrejat}
M.~Ondrejat.
\newblock Uniqueness for stochastic evolution equations in banach spaces.
\newblock {\em Dissert. Math. 426}, 2004.

\bibitem{Brzeniak1995}
Z.~Brze{\'{z}}niak.
\newblock Stochastic partial differential equations in m-type 2 banach spaces.
\newblock {\em Potential Analysis}, 4(1):1--45, 1995.

\bibitem{gerasimovics2019nonautonomous}
A.~Gerasimovics, A.~Hocquet, and T.~Nilssen.
\newblock Non-autonomous rough semilinear pdes and the multiplicative sewing
  lemma, 2019.
\newblock arXiv1907.13398.

\bibitem{gubinelli2010}
M.~Gubinelli and S.~Tindel.
\newblock Rough evolution equations.
\newblock {\em The Annals of Probability}, 38(1):1--75, 01 2010.

\bibitem{flandoli1995martingale}
F.~Flandoli and D.~Gatarek.
\newblock Martingale and stationary solutions for stochastic navier-stokes
  equations.
\newblock {\em Probability Theory and Related Fields}, 102(3):367--391, 1995.

\bibitem{brzezniak1997}
Z.~Brzeźniak.
\newblock On stochastic convolution in banach spaces and applications.
\newblock {\em Stochastics and Stochastic Reports}, 61(3-4):245--295, 1997.

\bibitem{maxinvanneerven}
J.~Van~Neerven and J.~Zhu.
\newblock A maximal inequality for stochastic convolutions in 2-smooth banach
  spaces.
\newblock {\em Electron. Commun. Probab.}, 16:689--705, 2011.

\bibitem{veraar_note_2011}
M.~Veraar and L.~Weis.
\newblock A note on maximal estimates for stochastic convolutions.
\newblock {\em Czechoslovak Mathematical Journal}, 61(3):743, 2011.

\bibitem{van_neerven_stochastic_2012}
J.~van Neerven, M.~Veraar, and L.~Weis.
\newblock Stochastic maximal ${L}^p$ -regularity.
\newblock {\em The Annals of Probability}, 40(2):788--812, 2012.

\bibitem{neerven2020maximal}
J.~van Neerven and M.~Veraar.
\newblock Maximal estimates for stochastic convolutions in 2-smooth banach
  spaces and applications to stochastic evolution equations, 2020.
\newblock arXiv2006.08325.

\bibitem{hof_lec_notes}
M.~Hofmanová.
\newblock Stochastic partial differential equations.
\newblock {\em Lecture notes}, 2016.

\end{thebibliography}
 \end{document}